\numberwithin{equation}{section}
\newtheorem{theorem}{Theorem}[section]
\newtheorem{lemma}[theorem]{Lemma}
\newtheorem{condition}[theorem]{Condition}
\theoremstyle{definition}
\newtheorem{remark}[theorem]{Remark}
\newtheorem*{acks}{Acknowledgements}
\theoremstyle{remark}
\newenvironment{romenumerate}{\begin{enumerate}
 }{\end{enumerate}}
\newcounter{thmenumerate}
\newcounter{xenumerate}
\newcommand{\refT}[1]{Theorem~\ref{#1}}
\newcommand{\refCN}[1]{Condition~\ref{#1}}
\newcommand{\refL}[1]{Lemma~\ref{#1}}
\newcommand{\refR}[1]{Remark~\ref{#1}}
\newcommand{\refS}[1]{Section~\ref{#1}}
\newcommand{\refand}[2]{\ref{#1} and~\ref{#2}}
\xdef\klockan{\the\count1.0\the\count255}
\xdef\klockan{\the\count1.\the\count255}\fi
\newcommand{\sumji}{\sum_{j=1}^\infty}
\newcommand{\sumk}{\sum_{k=0}^\infty}
\newcommand{\sumki}{\sum_{k=1}^\infty}
\newcommand{\sumin}{\sum_{i=1}^n}
\newcommand\set[1]{\ensuremath{\{#1\}}}
\newcommand\xpar[1]{(#1)}
\newcommand\bigpar[1]{\bigl(#1\bigr)}
\newcommand\Bigpar[1]{\Bigl(#1\Bigr)}
\newcommand\lrpar[1]{\left(#1\right)}
\newcommand\bigabs[1]{\bigl|#1\bigr|}
\newcommand\biggabs[1]{\biggl|#1\biggr|}
\newcommand\lrabs[1]{\left|#1\right|}
\def\rompar(#1){\textup(#1\textup)}    
\def\xexp(#1){e^{#1}}
\newcommand\floor[1]{\lfloor#1\rfloor}
\newcommand\ntoo{\ensuremath{{n\to\infty}}}
\newcommand\iid{i.i.d.\spacefactor=1000}
\newcommand\ie{i.e.\spacefactor=1000}
\newcommand\eg{e.g.\spacefactor=1000}
\newcommand\cf{cf.\spacefactor=1000}
\newcommand{\tend}{\longrightarrow}
\newcommand\dto{\overset{\mathrm{d}}{\tend}}
\newcommand\pto{\overset{\mathrm{p}}{\tend}}
\newcommand\Op{O_{\mathrm p}}
\newcommand\op{o_{\mathrm p}}
\newcommand\bbN{\mathbb N}
\newcommand\E{\operatorname{\mathbb E{}}}
\renewcommand\P{\operatorname{\mathbb P{}}}
\newcommand\Var{\operatorname{Var}}
\newcommand\Exp{\operatorname{Exp}}
\newcommand\Po{\operatorname{Po}}
\newcommand\Bi{\operatorname{Bi}}
\newcommand\ESF{\operatorname{ESF}}
\newcommand\ga{\alpha}
\newcommand\gb{\beta}
\newcommand\gd{\delta}
\newcommand\gam{\gamma}
\newcommand\gl{\lambda}
\newcommand\go{\omega}
\newcommand\eps{\varepsilon}
\renewcommand\phi{\varphi}
\newcommand\cC{\mathcal C}
\newcommand\cE{\mathcal E}
\newcommand\tA{{\widetilde A}}
\newcommand\tS{{\widetilde S}}
\newcommand\tV{{\widetilde V}}
\newcommand\tH{\widetilde{H}}
\def\[#1]{[\![#1]\!]}
\newcommand\qq{^{1/2}}
\newcommand\qqw{^{-1/2}}
\newcommand\qqq{^{1/3}}
\newcommand\qqqw{^{-1/3}}
\newcommand\qw{^{-1}}
\newcommand\qww{^{-2}}
\newcommand\qqcw{^{-3/2}}
\renewcommand{\=}{:=}
\newcommand\gnp{\ensuremath{G(n,p)}}
\newcommand\gnm{\ensuremath{G(n,m)}}
\newcommand\gnd{\ensuremath{G(n,(d_i)_1^n)}}
\newcommand\gndx{\ensuremath{G^*(n,(d_i)_1^n)}}
\newcommand\nn{^{(n)}}
\newcommand\nxf{N^{(\floor x)}}
\newcommand\nx{N^{( x)}}
\newcommand\whp{{whp}}
\newenvironment{Cenumerate}{\begin{enumerate}
 }{\end{enumerate}}
\newcommand\suptoo{\sup_{t\ge0}}
\newcommand\ee[1]{e^{#1}}
\newcommand\eez[1]{e^{-#1}}
\newcommand\eet{e^{-t}}
\newcommand\tvk{\tV_k}
\newcommand\dmax{d_\textup{max}}
\newcommand\opn{\op(n)}
\newcommand\tC{{\widetilde \cC}}
\newcommand\pC{{\cC'}}
\newcommand\ppC{{\cC''}}
\newcommand\refCNN{\refCN{C1}}
\newcommand\refCC[1]{\refCN{C1}\ref{#1}}
\newcommand\gan{\ga_n}
\newcommand\gbn{\gb_n}
\newcommand\suptto{\sup_{t\le t_0}}
\newcommand\suptanto{\sup_{t\le \gan t_0}}
\newcommand\hhn{\check{H}_n}
\newcommand\thalf{\tfrac12}
\newcommand\half{\frac12}
\newcommand\nkn{\frac{n_k}n}
\newcommand\ngaaa{n\gan^3}
\newcommand\eee{\cE_\eps}
\newcommand\epsn{\eps_n}
\newcommand\CS{Cauchy--Schwarz}
\begin{document}
\title
{A new approach to the giant component problem}

\date{12 July 2007} 

\author{Svante Janson}
\address{Department of Mathematics, Uppsala University, PO Box 480,
SE-751~06 Uppsala, Sweden}
\email{svante.janson@math.uu.se}
\urladdr{http://www.math.uu.se/\~{}svante/}

\author{Malwina J. Luczak}
\address{Department of Mathematics, London School of Economics,
  Houghton Street, London WC2A 2AE, United Kingdom}
\email{m.j.luczak@lse.ac.uk}
\urladdr{http://www.lse.ac.uk/people/m.j.luczak@lse.ac.uk/}

\keywords{random graph, giant component, death process, empirical
  distribution}
\subjclass[2000]{05C80; 60C05}

\begin{abstract}
We study the largest component of a random (multi)graph on $n$
vertices with a given degree sequence.
We let \ntoo. Then, under some regularity conditions on the degree
sequences,
we give conditions on the asymptotic shape of the degree sequence
that imply that
with high probability all the components are small,
and other conditions that imply that with high probability
there is a giant component and the sizes of its vertex and edge sets
satisfy a law of large numbers;
under suitable assumptions these are the only two possibilities.
In particular, we recover the results by Molloy and Reed
\cite{MR95,MR98} on the size of the largest component in a random
graph with a given degree sequence.

We further obtain a new sharp result for the giant component just
above the threshold,
generalizing the case of $\gnp$ with $np=1+\omega(n)n\qqqw$, where
$\go(n)\to\infty$ arbitrarily slowly.

Our method is based on the properties
of empirical distributions of independent random variables, and leads
to simple proofs.
\end{abstract}

\maketitle

\section{Introduction}\label{S:intro}

For many years, questions concerning the size and structure of the
largest component
in a random graph have attracted a lot of attention. There have
by now been quite a number of studies for the Bernoulli random graph $\gnp$
with $n$ vertices and edge
probability $p$, and for the uniformly random graph $\gnm$ with $n$
vertices and $m$ edges (see for instance~\cite{b01,JLR} and the references
therein).
Further,
a number of
studies~\cite{MR95,MR98,KS06} have considered the emergence of a giant
component in a random graph with a specified degree sequence.
In~\cite{MR95}, Molloy and Reed found
the threshold for the appearance of a giant component in a random
graph on $n$ vertices with a given degree sequence;
in~\cite{MR98}, they gave further results including the size of this
giant component above this critical window.
Their strategy was to analyse an edge deletion algorithm that finds
the components in a graph, showing that the corresponding random process
is well approximated by the solution to a system of differential equations.
The proof is rather long and complicated, and uses a bound of the
order $n^{1/4}$ on the maximum degree.
More recently, \citet{KS06}
have considered the near-critical behaviour of such graphs, once
again assuming that, for some $\epsilon >0$, the maximum degree does
not exceed $n^{1/4 - \epsilon}$. Using singularity analysis of
generating functions, they determine the size of the giant component
very close to the critical window, with a gap logarithmic in the
number of vertices.

In this paper, we present a simple solution to the giant component
problem. Unlike~\citet{MR95,MR98}, we do not use
differential equations, but rely solely
on the convergence of empirical distributions of independent
random variables.
(We use a variant of the method we used
in \cite{SJ184,SJ196}
to study the $k$-core of a
random graph.)
In the super-critical regime, we require only
conditions on the second moment of the asymptotic degree distribution;
in the critical regime, we require a fourth moment condition, but we
are able to go all the way to the critical window, without any
logarithmic separation. This is striking, as that logarithmic (or even larger)
separation is often very hard to get rid of, see for
instance~\cite{HL06} in the case of percolation on the Cartesian
product of two complete graphs on $n$ vertices, or~\cite{BCHSS06} in percolation on the $n$-cube, and also~\cite{KS06}
for the model analysed in the present paper.
Like~\citet{MR95,MR98}, we work
directly in the configuration model used to construct the random
graph, exposing the edges one by one as they are needed.

We work with random graphs with given vertex degrees. Results for some
other random graph models, notably for \gnp{} and \gnm, follow
immediately by conditioning on the vertex degrees.

Our method uses a version of the standard exploration of
components.
A commonly used, and very successful,  method to study the giant
component is to make a branching process approximation of the early
stages of this exploration, thus focussing on the
beginning of the exploration of each component
and the conditions for not becoming extinct too soon; see \eg{}
\citet{JLR,MR95,KS06}
and, for some more complicated cases, \citet{SJ199}.
It should be noted that, in contrast, our method focuses on the
condition for \emph{ending} each exploration.

\section{Notation and results}

To state our results we introduce some notation.
For a graph $G$, let
$v(G)$ and $e(G)$ denote the numbers of  vertices and edges in $G$,
respectively; further, let $v_k(G)$ be the number of vertices of
degree $k$, $k\ge0$.

Let $n \in \bbN$ and let $(d_i)_1^n$ be a sequence of
non-negative integers.
We let \gnd{} be a random graph with degree sequence
$(d_i)_1^n$, uniformly chosen among all possibilities (tacitly
assuming that there is any such graph at all).

It will be convenient in the proofs below
to work with \emph{multigraphs}, that is to allow
multiple edges and loops.
More precisely,
we shall use the following standard type of random multigraph:
Let $n \in \bbN$ and let $(d_i)_1^n$ be a sequence of
non-negative integers such that  $\sumin d_i$ is even.
We let \gndx{} be the \emph{random multigraph with given degree
sequence $(d_i)_1^n$}, defined
by the configuration model (see \eg{} \citet{b01}):
take a set of $d_i$ \emph{half-edges} for each vertex
$i$, and combine the half-edges into pairs by a uniformly random
matching of the set of all half-edges.
Note that \gndx{} does not have exactly the uniform
distribution over all multigraphs with the given degree sequence;
there is a weight with a factor $1/j!$ for every edge of multiplicity $j$, and
a factor $1/2$ for every loop, see \eg{} \cite[\S1]{giant}. However,
conditioned on the multigraph being a (simple) graph, we obtain
\gnd,
the
uniformly distributed random graph with the given degree sequence.

We assume throughout the paper that we are given a sequence
$(d_i)_1^n=(d_i\nn)_1^n$
for each $n\in\bbN$ (or at least for some sequence \ntoo);
for notational simplicity we will usually not show the dependence on
$n$ explicitly.
We consider asymptotics as \ntoo, and all unspecified limits below are
as \ntoo.
We say that an event holds \whp{} (\emph{with high probability}),
if it holds with probability tending to 1 as
$n\to\infty$.
We shall use $\pto$ for convergence in probability and
$\Op$ and
$\op$ in the standard way (see \eg{}
Janson, {\L}uczak and Ruci\'nski~\cite{JLR}); for example,
if $(X_n)$ is a sequence of random variables, then
$X_n=\Op(1)$ means ``$X_n$ is bounded in probability'' and
$X_n=\op(1)$ means that $X_n \pto0$.

We write
\begin{align*}
m&=m(n)\=\tfrac12\sumin d_i
\intertext{and}
n_k&=n_k(n)\=\#\set{i:d_i=k},
\quad k\ge0;    
\end{align*}
thus $m$ is the number of
edges and $n_k$ is the number of vertices of degree
in
the random graph \gnd{} (or \gndx).
We assume that the given
$(d_i)_1^n$ satisfy the following regularity conditions,
\cf{} Molloy and Reed \cite{MR95,MR98} (where similar but not
identical conditions are assumed).

\begin{condition}\label{C1}
For each $n$,  $(d_i)_1^n=(d_i\nn)_1^n$ is a sequence of non-negative
integers such that $\sumin d_i$ is even.
Furthermore,
$(p_k)_{k=0}^\infty$ is a probability distribution independent of $n$
such that
\begin{romenumerate}
  \item\label{C1p}
$n_k/n=\#\set{i:d_i=k}/n\to p_k$ as \ntoo, for every $k\ge0$;
\item \label{C1l}
$\gl\=\sum_k k p_k\in(0,\infty)$;
\item \label{C1d2}
$\sum_i d_i^ 2=O(n)$;
\item \label{C1p1}
$p_1>0$.
\end{romenumerate}
\end{condition}

Let $D_n$ be a random variable defined as the degree of a random
(uniformly chosen) vertex in \gnd{} or \gndx;
thus
\begin{equation}
  \P(D_n=k)=n_k/n.
\end{equation}
Note that $\E D_n=n\qw\sumin d_i=2m/n$.

Further,
let $D$ be a random variable with the distribution $\P(D=k)=p_k$.
Then \ref{C1p} can be written
\begin{equation}
  \label{add}
D_n\dto D.
\end{equation}
In other words, $D$ describes the asymptotic distribution of the degree of a
random vertex in \gnd.
Furthermore, \ref{C1l} is
$\gl=\E D\in(0,\infty)$,
\ref{C1p1} is $\P(D=1)>0$,
and
\ref{C1d2} can be written
\begin{equation}
  \label{aa}
\E D_n^2=O(1).
\end{equation}

\begin{remark}\label{RC1}
  In particular, \eqref{aa} implies that the random variables $D_n$
  are uniformly integrable, and thus \refCN{C1}\ref{C1p}, in the form
  \eqref{add},
 implies
$\E D_n\to\E D$, \ie{}
 \begin{equation}
   \label{mn}
\frac{2m}{n}=n\qw\sumin d_i\to\gl;
 \end{equation}
see \eg{} \cite[Theorems 5.4.2 and 5.5.9]{Gut}.
\end{remark}

Let
\begin{align}
g(x)\= \sumk p_k x^k =\E x^D,
\label{key-funct-2}
\end{align}
the probability generating function of the probability distribution
$(p_k)_{k=0}^\infty$, and define further
\begin{align}
  h(x)&\=
xg'(x)=
\sum_{k=1}^\infty  k p_k x^k,
\label{key-funct}
\\
H(x)&\= \gl x^2 -h(x).
\label{key-funct-1}
\end{align}
Note that 
$h(0)=0$ and $h(1)=\gl$, and thus $H(0)=H(1)=0$.
Note also that
\begin{equation}
H'(1)=2\gl-\sum_k k^2p_k
=  \E (2D-D^2)
= - \E D(D-2).
\end{equation}
See further \refL{LH}.

Our first theorem is essentially the main results of
\citet{MR95,MR98}.

\begin{theorem}
  \label{T1}
Suppose that \refCN{C1} holds and
consider the random  graph \gnd, letting \ntoo.
Let $\cC_1$ and $\cC_2$ be the largest and second largest components of \gnd.
  \begin{romenumerate}
\item\label{T1a}
If $\E D(D-2)=\sum_k k(k-2)p_k>0$, then there is a unique $\xi\in(0,1)$
such that $H(\xi)=0$, or equivalently $g'(\xi)=\gl\xi$,
and
\begin{align*}
  v(\cC_1)/n&\pto 1-g(\xi)>0,
\\
  v_k(\cC_1)/n&\pto p_k(1-\xi^k), \text{ for every } k\ge0,
\\
  e(\cC_1)/n&\pto \tfrac12\gl(1-\xi^2),
\end{align*}
while $ v(\cC_2)/n\pto 0$ and $e(\cC_2)/n\pto 0$.

\item\label{T1b}
If $\E D(D-2)=\sum_k k(k-2)p_k\le0$, then
$ v(\cC_1)/n\pto 0$ and $e(\cC_1)/n\pto 0$.
  \end{romenumerate}

The same results hold for \gndx.
\end{theorem}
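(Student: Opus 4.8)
**The plan is to analyze the standard component-exploration process on the configuration model \gndx, tracking the number of "open" (unpaired) half-edges, and to reduce everything to the behaviour of an empirical distribution of independent random variables.**

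First I would set up the exploration. Order the vertices, and repeatedly explore components: pick an unexplored vertex, declare all its half-edges active, and at each step pair off one active half-edge with a uniformly random half-edge among all currently unpaired ones; the new vertex so reached has its remaining half-edges activated (unless the partner lay in the current component already, in which case no new vertex is added but two active half-edges are consumed). A component is finished exactly when the pool of active half-edges first empties. The key observation — the one the introduction emphasizes — is that the relevant random quantity is the number of active half-edges as a function of the number of pairings performed, and we want to find when it returns to zero. Following the \cite{SJ184,SJ196} philosophy, I would realize the random matching dynamically via i.i.d.\ (or nearly i.i.d.) "clocks": assign each half-edge an independent $\Exp(1)$ variable, expose half-edges in increasing order of clock value, and rescale time so that at time $t$ a given half-edge has been exposed with probability $1-\et$ independently. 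Under this coupling, after running for "time" $t$ the number of vertices of degree $k$ that have had at least one half-edge exposed is, in expectation, $n_k(1-(1-(1-\et))^{\,?})$ — more precisely one tracks, for each vertex, the first time one of its half-edges is touched; a vertex of degree $k$ is "hit" by time $t$ with probability $1-\et{}^k$, and then contributes its remaining half-edges.

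The second step is to identify the deterministic profile. The number of half-edges that remain unpaired at time $t$ concentrates (by a law of large numbers for sums of independent contributions, using \refCC{C1d2} for the needed uniform integrability and second-moment control) around $2m \et \approx \gl n\et$. The number of half-edges belonging to already-hit vertices but not yet paired — i.e.\ the active/open half-edges driving the exploration — concentrates around something of the form $\gl n\et - \sum_k k p_k n (\et)^k \cdot(\text{correction})$; carrying out the bookkeeping carefully one finds the leading term is $n\bigl(h(\et)\cdot\text{something} - \gl\et^2\bigr)$-type expression, and after the right change of variables $x=\et$ the zero set of the limit profile is governed precisely by $H(x)=\gl x^2-h(x)=0$ on $(0,1)$. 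Here \refL{LH} (cited but not shown in the excerpt) presumably records that $H>0$ on $(0,1)$ iff $\E D(D-2)>0$, with a unique interior root $\xi$ in that case, and $H<0$ near $1$ otherwise. So: in case \ref{T1a}, the open-half-edge profile is bounded away from $0$ on an interval $(\xi',1)$ whose right endpoint corresponds to $x=\xi$, forcing \whp{} the exploration started from (essentially) any vertex not to terminate until a positive fraction of half-edges are used; in case \ref{T1b}, the profile is negative just below $1$, meaning open half-edges run out almost immediately, so every component is $\op(n)$.

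The third step converts profile information into the stated limits. For \ref{T1b}: since $H(x)<0$ for $x$ near $1$, with high probability every exploration hits zero active half-edges within $\op(n)$ pairings (a sprinkling/union-bound argument over all starting vertices, as in \cite{SJ184}), giving $v(\cC_1)=\op(n)$, $e(\cC_1)=\op(n)$. For \ref{T1a}: run the whole exploration (all vertices) as one process stopped at the "time" $\xi$ corresponding to the first interior zero; standard concentration shows that the union of all components explored after the profile last left zero before $\xi$ is negligible, while everything explored in $(\,\cdot\,,\xi)$ forms one giant piece because the profile stays positive there — this is the usual "one long excursion" argument. The fraction of vertices \emph{not} yet hit at time $\xi$ is $\sum_k p_k\xi^k=g(\xi)$, giving $v(\cC_1)/n\pto 1-g(\xi)$; a degree-$k$ vertex is in $\cC_1$ iff it is hit, probability $1-\xi^k$, giving $v_k(\cC_1)/n\pto p_k(1-\xi^k)$; and the edge count follows because the number of \emph{unpaired} half-edges at time $\xi$ is $\approx\gl n\xi^2$ (each half-edge unpaired iff both it and its eventual partner are untouched — hence the $\xi^2$), so $e(\cC_1)/n\pto\tfrac12\gl(1-\xi^2)$. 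Finally $v(\cC_2)=\op(n)$ because a second giant excursion would require the profile to return to a positive plateau, contradicting uniqueness of the interior zero. The transfer to the simple graph \gnd{} is the routine conditioning argument: \refCC{C1d2} implies $\liminf\P(\gndx\text{ simple})>0$, and all the conclusions are \whp{} statements, so they survive conditioning (see \cite{giant}, as referenced in the excerpt).

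**The main obstacle** I expect is making the concentration of the open-half-edge profile uniform in $t$ (equivalently in $x=\et$) strong enough, with only a second-moment hypothesis \refCC{C1d2}, to justify both the "last zero before $\xi$" and "no second excursion" steps — i.e.\ controlling the fluctuations of $\sum_i$ (independent, but not identically distributed and with unbounded degrees) terms like $\sum_i d_i \ett{\text{vertex }i\text{ hit}}$ uniformly. This is exactly where the technique of \cite{SJ184,SJ196} — Doob/Skorokhod-type maximal inequalities for the relevant martingale, plus the uniform integrability from \refR{RC1} — has to be deployed with care; the fourth-moment version needed for the near-critical sharpening (mentioned in the abstract) is presumably where a stronger functional CLT replaces this LLN, but for \refT{T1} the law of large numbers suffices.
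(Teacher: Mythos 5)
Your sketch follows the same overall route as the paper: realize the configuration-model matching via i.i.d.\ $\Exp(1)$ clocks on half-edges, track active/open half-edges as a function of (exponential) time, and identify the deterministic profile $H(\eez{t})$ whose sign structure (\refL{LH}) distinguishes sub- and supercritical behaviour. The formulas $v(\cC_1)/n\to 1-g(\xi)$, $v_k(\cC_1)/n\to p_k(1-\xi^k)$, $e(\cC_1)/n\to\tfrac12\gl(1-\xi^2)$ are obtained in the same way (a degree-$k$ vertex is hit by time $\tau=-\ln\xi$ with probability $1-\eez{k\tau}=1-\xi^k$, and the living half-edges decay like $\eez{2t}$). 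Your minor muddle about $L(t)\approx 2m\eez{t}$ vs.\ $L(t)\approx 2m\eez{2t}$ in the middle paragraph is self-corrected by the end, so I won't dwell on it.

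The genuine gap is in how you rule out a second giant component, and analogously in how you prove part \ref{T1b}. You write that in the subcritical case ``every exploration hits zero active half-edges within $\op(n)$ pairings (a sprinkling/union-bound argument over all starting vertices),'' and in the supercritical case that ``a second giant excursion would require the profile to return to a positive plateau, contradicting uniqueness of the interior zero.'' Neither of these actually works as stated. A naive union bound over $n$ starting vertices requires a uniform quantitative tail bound on individual exploration lengths, which you have not set up and which is not an immediate consequence of the concentration of the profile; moreover each exploration after the first runs on a \emph{conditioned} remaining graph, so the profile estimate no longer applies directly. And after the giant is explored, the accumulated correction $\tS(t)-S(t)$ (which grows by up to $\dmax$ at every execution of step \refCC{Ci} and which is bounded via $-\inf_{s\le t}\tA(s)$) is no longer under control, so you cannot simply cite that $H(\eez{t})<0$ for $t>\tau$ to bound $A(t)$ afterwards. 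The paper avoids this trap with a short size-biased sampling argument: it chooses the vertex in \refCC{Ci} by picking a uniformly random \emph{sleeping half-edge} (your version picks a uniform \emph{vertex}, which discards exactly the size-biasedness the argument needs), proves tight control of the process only up to the end $T_3$ of the \emph{one} component explored immediately after $\pC$, shows that this next component has $\op(n)$ edges, and then observes that if a second component $\cC$ with $\ge\eta m$ edges existed and had not already been found, the half-edge chosen at $T_2$ would lie in $\cC$ with probability $\ge\eta$, forcing $\cC=\ppC$, a contradiction. The same size-biased step proves part \ref{T1b} by looking only at the \emph{first} component $\tC$ and using $\P(e(\cC_1)\ge\eps m)\le\eps\qw\P(e(\tC)\ge\eps m)$. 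Without this device (or a genuine quantitative substitute), the ``no second excursion'' and part \ref{T1b} portions of your proposal do not close.
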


In the usual, somewhat informal, language,
the theorem shows that \gnd{} has a giant component if and only if
$\E D(D-2)>0$.

In the critical case, we can be more precise.

\begin{theorem}
  \label{T3}
Suppose that \refCN{C1} holds and that
$\E D(D-2)=\sum_k k(k-2)p_k=0$.
Assume further that
$\gan\=\E D_n(D_n-2)=\sumin d_i(d_i-2)/n>0$
and, moreover, $n\qqq\gan\to\infty$, and that
\begin{equation}\label{C2}
  \sumin d_i^{4+\eta} =O(n)
\end{equation}
for some $\eta>0$.
Let
$\gb\=\E D(D-1)(D-2)$.
Then, $\gb>0$ and
\begin{align*}
  v(\cC_1)&=\frac{2\gl}{\gb} n\gan+\op(n\gan),
\\
  v_k(\cC_1)&=\frac2{\gb}kp_k n\gan+\op(n\gan),
\text{ for every } k\ge0,
\\
  e(\cC_1)&=\frac{2\gl}{\gb} n\gan+\op(n\gan),
\end{align*}
while $ v(\cC_2)=\op(n\gan)$ and $e(\cC_2)/n=\op(n\gan)$.

The same results hold for \gndx.
\end{theorem}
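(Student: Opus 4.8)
The plan is to reuse the component exploration of \refT{T1}, localized near the critical time. In that exploration one attaches i.i.d.\ $\mathrm{Exp}(1)$ clocks to the half-edges of \gndx{} and exposes edges in increasing order of clock time, pairing each ringing half-edge with a uniformly random not-yet-paired half-edge; the key process is (a rescaled version of) the number of \emph{active} half-edges at time $t$, i.e.\ half-edges of already-discovered vertices that are not yet paired. Its ``skeleton'' (conditional mean curve) is $n$ times $H_n(\et)$, the finite-$n$ analogue of $H(\et)$ obtained by replacing $\gl$ and $p_k$ by $\gl_n=2m/n$ and $n_k/n$; the components of the graph correspond to the excursions of the process away from $0$, and the giant component is the excursion on which the skeleton is positive. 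In the supercritical case $H'(1)=-\E D(D-2)<0$ this bump has length of order $1$, whereas in the critical case $H'(1)=0$, so the limiting skeleton $H(\et)$ is merely tangent to $0$ at $t=0$ (and, by \refL{LH}, nonpositive on $(0,1)$); one must therefore work on the shrinking window $t=O(\gan)$ and retain the finite-$n$ correction $H_n'(1)=-\gan<0$.

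The first step is to expand the skeleton. From $H_n(1)=0$, $H_n'(1)=-\gan$, and the convergence of $H_n'',H_n'''$ to $H'',H'''$ (supplied by \eqref{C2} through uniform integrability of $D_n^{3}$), a Taylor expansion at $t=0$ gives, uniformly for $t=O(\gan)$,
\begin{equation*}
 H_n(\et)=\gan t+\tfrac12\bigl(H_n'(1)+H_n''(1)\bigr)t^{2}+O\!\bigl(t^{3}\bigr)
 =\gan t-\tfrac{\gb}{2}\,t^{2}+o(t^{2})+O\!\bigl(t^{3}\bigr),
\end{equation*}
where the identity $\tfrac12 H''(1)=-\tfrac{\gb}{2}$ used in the critical case is exactly the arithmetic consequence of $\E D^{2}=2\E D$. (That $\gb>0$, as asserted, follows since $D(D-1)(D-2)\ge0$ for integer $D\ge0$ with equality only on $\{0,1,2\}$, while $\E D(D-2)=0$ and $p_1>0$ together force $\P(D\ge3)>0$.) Hence the skeleton has a single positive bump on $(0,t^{*})$ with $t^{*}=\tfrac{2\gan}{\gb}\bigl(1+o(1)\bigr)$ and height of order $n\gan^{2}$. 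Because the clocks are independent, the numbers of vertices, of degree-$k$ vertices, and of edges exposed by time $t$ obey laws of large numbers and equal $n\bigl(1-g(\et)\bigr)+\op(n\gan)$, $n_k\bigl(1-e^{-kt}\bigr)+\op(n\gan)$, and $\tfrac12 n\gl\bigl(1-e^{-2t}\bigr)+\op(n\gan)$; evaluating at $t=t^{*}$ (with $1-g(\et)=\gl t+O(t^{2})$, $1-e^{-kt}=kt+O(t^{2})$, $1-e^{-2t}=2t+O(t^{2})$, and using $\gan\to0$ so that $n\gan^{2}=o(n\gan)$) yields precisely the constants $\tfrac{2\gl}{\gb}$, $\tfrac{2}{\gb}kp_k$, and $\tfrac{2\gl}{\gb}$ in the statement.

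The substance of the proof is the fluctuation estimate: one must show the exploration process stays within $\op(n\gan^{2})$ of its skeleton throughout $t=O(\gan)$, so that the bump is realized as a genuine excursion of the predicted length and height, is not broken into smaller pieces, and is the unique large one (the second-largest component being $\op(n\gan)$), and one must rule out anything comparable happening over the $O(1)$-long pre-critical stretch. Writing the process as skeleton plus a martingale $M_n$ plus a negligible remainder, the excursion corresponding to $\cC_1$ involves $\Theta(n\gan)$ exposure steps, each perturbing $M_n$ by an amount governed by the degree of the vertex just discovered, so the accumulated quadratic variation is $O(n\gan)$ and a Doob/Freedman maximal inequality gives $\sup_t|M_n(t)|=\Op\bigl((n\gan)^{1/2}\bigr)$ up to lower-order factors; this is $\op(n\gan^{2})$ precisely when $n\gan^{3}\to\infty$, which is where that hypothesis enters. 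Condition \eqref{C2} is what makes all of this uniform and rigorous: it forces $\dmax=O\bigl(n^{1/(4+\eta)}\bigr)=o(n^{1/4})$, so no single step moves the process by as much as $n\gan^{2}$; it bounds sums of $d_i^{2}$ and $d_i^{3}$ over the random discovered sets, hence the drift corrections and the quadratic-variation bounds; and the surplus exponent $\eta$ gives the power saving needed to turn tail bounds into the required $\op$-statements uniformly on a shrinking window. Once the process is pinned to its skeleton, the excursion structure transfers to the combinatorics, giving the displayed asymptotics for \gndx; conditioning on \gndx{} being simple (an event of probability bounded away from $0$ when $\sum d_i^{2}=O(n)$) then yields the same for \gnd.

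I expect this fluctuation estimate --- in particular obtaining an error of the sharp order $(n\gan)^{1/2}$, so that the threshold comes out as $n\gan^{3}\to\infty$ rather than something weaker --- to be the main obstacle, with the uniform control of the pre-critical stretch and of the finite-$n$ and Taylor-remainder corrections, all via \eqref{C2}, the attendant technical work.
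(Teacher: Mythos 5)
Your high-level strategy matches the paper's: Taylor-expand the finite-$n$ skeleton $H_n(e^{-t})$ near $t=0$ to reveal a positive bump of length $\sim 2\gan/\gb$ and height $\sim n\gan^2$; establish concentration of order $(n\gan)^{1/2}$ on the window $t=O(\gan)$ via a martingale maximal inequality; and observe that the fluctuation is $o(n\gan^2)$ exactly when $n\gan^3\to\infty$. The arithmetic for $\gb>0$ (from $p_1>0$ and $\E D(D-2)=0$) and the identification of the constants are also correct, and the handling of uniqueness of the large excursion mirrors the paper's size-biased-sampling argument.

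Where you diverge is in how the concentration estimate is set up. You propose a direct decomposition of the exploration process $A(t)$ as skeleton plus a martingale $M_n$ plus a ``negligible remainder,'' and then Doob/Freedman on $M_n$. The paper does something structurally cleaner: it introduces the counts $\tvk(t)$ of degree-$k$ vertices all of whose $k$ half-edges have i.i.d.\ $\Exp(1)$ clock $>t$ — defined from the clocks alone, with no reference to the exploration order. Each $\tvk$ is then a pure death process, $e^{kt}\tvk(t)$ is a martingale, and Doob's $L^2$ inequality yields the sharp bound of \refL{L2} and hence Lemmas \refand{LL+}{LtS+}. The exploration's feedback on the state is isolated into the entirely deterministic \refL{LSS}, which bounds $0\le\tS(t)-S(t)<-\inf_{s\le t}\tA(s)+\dmax$ pathwise. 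This decoupling — independent processes carry the stochastic fluctuation, a deterministic inequality handles the exploration — is what makes the $(n\gan)^{1/2}$ vs.\ $n\gan^2$ comparison, and thus the threshold $n\gan^3\to\infty$, come out cleanly. Your direct martingale version has to account for state-dependent drifts of $A(t)$ and to specify the ``remainder'' precisely; it can presumably be pushed through, but it is considerably more bookkeeping than the paper's route, and that is where the technical risk in your sketch lies.
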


\begin{remark}\label{R3}
  Condition \eqref{C2} may be written $\E D_n^{4+\eta}<\infty$;
it thus implies \eqref{aa} and \refCNN\ref{C1d2};
moreover, it implies that $D_n^2$ and $D_n^3$ are uniformly
integrable.
Hence, using \eqref{add}, \eqref{C2} implies $\E D_n^2\to\E D^2$ and
$\E D_n^3\to\E D^3$.
In particular, the conditions of \refT{T3} imply
\begin{equation}
  \gan\=\E D_n(D_n-2)\to\E D(D-2)=0
\end{equation}
and
\begin{equation}
  \gbn\=\E D_n(D_n-1)(D_n-2)\to\E D(D-1)(D-2)=\gb.
\end{equation}

We do not think that the condition \eqref{C2} is best possible; we
conjecture that, in addition to \refCN{C1}, it is enough to assume
that $D_n^3$ are uniformly
integrable, or, equivalently, that $\E D_n^3\to\E D^3<\infty$.
\end{remark}

\refCN{C1}\ref{C1d2} and \eqref{mn} imply that
\begin{equation}
  \label{simple}
\liminf \P\bigpar{\gndx\text{ is a simple graph}}>0,
\end{equation}
see for instance \citet{b01}, \citet{McKay} or
\citet{McKayWo}
under some extra condition on $\max d_i$
and
\citet{SJ195} for the general case.
Since we obtain \gnd{} by conditioning \gndx{} on being a simple
graph, and all results in Theorems \refand{T1}{T3} are (or can be)
stated in terms of convergence in probability, the results for \gnd{}
follow from the results for \gndx{} by this conditioning.

We will prove Theorems \refand{T1}{T3} for \gndx{}
in Sections \refand{SpfT1}{SpfT3}.
The proofs use the same arguments, but we find it convenient to first
discuss the somewhat simpler case of \refT{T1} in detail and then do
the necessary modifications for \refT{T3}.

\begin{remark}
  \label{R2}
The assumption \refCC{C1d2} is used in our proof mainly for the
reduction to \gndx. In fact, the proof of \refT{T1} for \gndx{} holds
with simple modifications also if \refCC{C1d2} is replaced by the
weaker condition that $D_n$ are uniformly integrable, or equivalently,
see \refR{RC1}, $\E D_n\to\E D$ or \eqref{mn}.
It might also be possible to extend \refT{T1} for \gnd{} too, under
some weaker assumption that \refCC{C1d2}, by combining estimates of
$\P\bigpar{\gndx\text{ is simple}}$ from \eg{} \citet{McKayWo}
with more precise estimates of the error probabilities in
\refS{SpfT1}, but we have not pursued this.
\end{remark}

\begin{remark}
  \refCC{C1p1} excludes the case $p_1=0$; we comment briefly on this
  case here.
Note first that in this case, $\E D(D-2)=\sum_{k=3}^\infty
  k(k-2)p_k\ge0$, with strict inequality as soon as $p_k>0$ for some
  $k\ge3$.

First, if $p_1=0$ and $\E D(D-2)>0$, \ie{} if $p_1=0$ and $\sum_{k\ge
  3}p_k>0$,
it is easily seen (by modifying the proof of \refT{T1} below or by
  adding $\eps n$ verticas of degree 1 and applying \refT{T1})
that all but $\op(n)$ vertices and edges belong to a single giant
  component. Hence, the conclusions of \refT{T1}\ref{T1a} hold with
  $\xi=0$. (In this case, $H(x)>0$ for every $x\in(0,1)$.)

The case $p_1=0$ and $\E D(D-2)=0$, \ie{} $p_k=0$ for all $k\neq 0,2$,
is much more exceptional.
(In this case, $H(x)=0$ for all $x$.)
We give three examples showing that quite different behaviours are possible.
Since isolated vertices do not matter, let us assume $p_0=0$ too and
consider thus the case $p_2=1$.

One example is when all $d_i=2$, so we are studying a random 2-regular
graph.
In this case, the components are cycles. It is well-known, and easy to
see, that (for the multigraph version) the distribution of cycle
lengths is given by the Ewens's sampling formula $\ESF(1/2)$, see
\eg{} \citet{ABT}, and thus $v(\cC_1)/n$ converges in distribution to
a non-degenerate distribution on $[0,1]$ and not to any constant
\cite[Lemma 5.7]{ABT}.
Moreover, the same is true for $v(\cC_2)/n$ (and for $v(\cC_3)/n,
\dots$), so in this case there are several large components.

A second case with $p_2=1$ is obtained by adding a small number of
vertices of degree 1. (More precisely, let $n_1\to\infty$,
$n_1/n\to0$, and $n_2=n-n_1$.) It is then easy to see that
$v(\cC_1)=\opn$.

A third case with $p_2=1$ is obtained by instead adding a small number of
vertices of degree 4 (\ie, $n_4\to\infty$,
$n_4/n\to0$, and $n_2=n-n_4$).
By regarding each vertex of degree 4 as two vertices of degree 2 that
have merged,
it is easy to see that in this case $v(\cC_1)=n-\opn$, so there is a
giant component containing almost everything. (The case $\xi=0$ again.)
\end{remark}

\section{\gnp, \gnm{} and other random graphs}\label{Sother}
The results above can be applied to some other random graphs models
too by conditioning on the vertex degrees;
this works whenever the random graph conditioned on the degree
sequence has a uniform distribution over all possibilities.
Notable examples of such random graphs are \gnp{} and \gnm, and other
examples  are given in
\cite{BrittonDML},
\cite[Section 16.4]{SJ178},
\cite{SJeven}.
If, furthermore, \refCN{C1} and \eqref{C2} hold in probability
(where now $d_i$ are the random vertex degrees),
then Theorems \refand{T1}{T3} hold;
in the latter, we define
$\gan\=\sumin d_i(d_i-2)/n$, which now is random.
(For the proof, it is convenient to use
the Skorohod coupling theorem \cite[Theorem 4.30]{Kallenberg}
and assume that the conditions hold a.s.)

For example, for \gnp{} with $np\to\gl$ or \gnm{} with $2m/n\to\gl$,
where $0<\gl<\infty$, the assumptions hold with $D\sim\Po(\gl)$ and
thus $g(x)=e^{\gl(x-1)}$,
$h(x)=\gl xe^{\gl(x-1)}$,
$H(x)=\gl x\bigpar{x-e^{\gl(x-1)}}$, and we recover the
both the classical threshold $\gl=1$ and
the standard equation $\xi=e^{\gl(\xi-1)}$ for the size of the giant
component when $\gl>1$.

If we consider \gnp{} with $p=(1+\epsn)/n$ where $\epsn\to0$ in
\refT{T3}, we have
$\ga_n/\epsn\pto1$ by the second moment method as soon as
$n\epsn\to\infty$, so we need $n\qqq\epsn\to\infty$ in order to apply
\refT{T3}.
On the other hand, it is well known \cite{b01,giant,JLR}
that if $n\qqq\epsn=O(1)$,
then $v(\cC_1)$ and $v(\cC_2)$ are both of the same order $n^{2/3}$ and
\refT{T3} fails,
which shows that the condition $n\qqq\gan\to\infty$ in \refT{T3}
is best possible.

\section{Finding the largest component}\label{Sfind}

The components of an
arbitrary finite graph or multigraph can be found by the following
standard procedure.
Pick an arbitrary vertex $v$ and determine the component
of $v$ as follows: include all the neighbours of $v$ in an
arbitrary order; then
add in the neighbours of neighbours, and so on, until no more vertices can
be added. The vertices included until this moment form the
component of $v$. If there are still vertices left in the graph, pick any
such vertex $w$, and repeat the above to determine the second
component (the component of vertex $w$). Carry on in this manner until
all the components have been found.

It is clear that we obtain the same result as follows.
Regard each edge as consisting of two {half-edges}, each
half-edge having one endpoint.
We will label the vertices as \emph{sleeping} or \emph{awake} (= used)
and the half-edges as \emph{sleeping}, \emph{active} or \emph{dead};
the sleeping and active half-edges are also called \emph{living}.
We start with all vertices and half-edges sleeping.
Pick a vertex and label its half-edges as active. Then take any active
half-edge, say $x$ and find its partner $y$ in the graph; label these two
half-edges as dead; further, if the endpoint of $y$ is sleeping, label
it as awake and all other half-edges there as active.
Repeat as long as there is any active half-edge.

When there is no active half-edge left, we have obtained the first
component. Then start again with another vertex until all components
are found.

We apply this algorithm to a random multigraph \gndx{} with a given degree
sequence, revealing its edges during the process. We thus observe
initially only the vertex degrees and the half-edges, but not how they
are joined to form edges. Hence, each time we need a partner of an
half-edge, it is uniformly distributed over all other living
half-edges. (The dead half-edges are the ones that already are paired
into edges.)
We make these random choices by giving the half-edges \iid{} random maximal
lifetimes $\tau_x$ with the distribution $\Exp(1)$; in other words, each
half-edge dies spontaneously with rate 1 (unless killed earlier). Each
time we need to find the partner of a half-edge $x$, we then wait
until the next living half-edge $\neq x$ dies and take that one. We
then can formulate an algorithm, constructing $\gndx$ and exploring
its components simultaneously, as follows.
Recall that we start
with all vertices and half-edges sleeping.
\begin{Cenumerate}
  \item\label{Ci}
If there is no active half-edge (as in the beginning), select a
sleeping vertex and declare it awake and all its half-edges active.
For definiteness, we choose the vertex
by choosing a half-edge
uniformly at random among all
sleeping half-edges.
If there is no sleeping half-edge left, the process stops;
the remaining sleeping vertices are all isolated
and we have explored all other components.
  \item\label{Cii}
Pick an active half-edge (which one does not matter) and kill it, \ie,
change its status to dead.
  \item\label{Ciii}
Wait until the next half-edge dies (spontaneously). This half-edge is
joined to the one killed in the previous step \ref{Cii} to form an edge of
the graph. If the vertex it belongs to is sleeping, we change this
vertex to awake and all other half-edges there to active.
Repeat from \ref{Ci}.
\end{Cenumerate}

The components are created between the successive times \ref{Ci} is
performed; the vertices in the component created during one of these
intervals are the vertices that are awakened during the interval.
Note also that a component is completed and \ref{Ci} is performed
exactly when the number of active half-edges is 0 and a half-edge dies
at a vertex where all other half-edges (if any) are dead.

\section{Analysis of the algorithm for \gndx}\label{SpfT1}

Let $S(t)$ and $A(t)$ be the numbers of sleeping and active
half-edges, respectively, at time $t$, and let $L(t)=S(t)+A(t)$ be the
number of living half-edges. As is customary, and for definiteness, we
define these random functions to be right-continuous.

Let us first look at $L(t)$.
We start with $2m$ half-edges, all sleeping and thus living,
but we immediately perform \ref{Ci} and \ref{Cii} and kill one of
them; thus $L(0)=2m-1$. In the sequel, as soon as a living half-edge
dies, we perform \ref{Ciii} and then (instantly) either \ref{Cii}
or both \ref{Ci} and \ref{Cii}.
Since \ref{Ci} does not change the number of living half-edges while
\ref{Cii} and \ref{Ciii} each decrease it by 1, the total result is
that $L(t)$ is decreased by 2 each time one of the living half-edges
dies, except when the last living one dies and the process terminates.

\begin{lemma}
  \label{LL}
As \ntoo,
\begin{equation*}
\suptoo\bigabs{n\qw L(t)-\gl\ee{-2t}} \pto0.
\end{equation*}
\end{lemma}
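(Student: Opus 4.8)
The plan is to recognise $L(t)$ as (essentially) a pure death process, establish an exponential martingale for $e^{2t}L(t)$, and then combine a Doob estimate on a fixed interval with a monotonicity argument for the tail. \emph{Step 1 (the death-process structure).} As already noted in the description of the algorithm, $L$ is constant between consecutive half-edge deaths, and each spontaneous death (completion of a step~\ref{Ciii} wait) decreases $L$ by $1$ for the half-edge that died and then instantly by a further $1$ for the half-edge killed in the ensuing step~\ref{Cii}; the only exception is the very last death, which is followed by no kill. Moreover, by the memorylessness of $\Exp(1)$, at any time $t$ each of the $L(t)$ living half-edges carries a residual $\Exp(1)$ clock, independent of all earlier choices, so the next spontaneous death occurs at rate exactly $L(t)$. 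Thus, writing $R(t)$ for the number of spontaneous deaths in $[0,t]$ and $T^\ast$ for the (finite) termination time, we have $L(t)=2m-1-2R(t)$ for $t<T^\ast$ and $L(t)=0$ for $t\ge T^\ast$; in particular $\bigabs{L(t)-(2m-1-2R(t))}\le1$ always, $L$ is non-increasing, and $R$ is a counting process with stochastic intensity $L(t-)$.

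\emph{Step 2 (the exponential martingale).} By the Doob--Meyer decomposition, $M(t)\=R(t)-\int_0^t L(s)\dd s$ is a martingale with $\langle M\rangle(t)=\int_0^t L(s)\dd s$, and hence so is $Y(t)\=(2m-1)-2\int_0^t e^{2s}\dd M(s)$, with $Y(0)=2m-1$. Using $\dd L=-2\dd R$ for $t<T^\ast$ one computes $\dd\bigpar{e^{2t}L(t)}=e^{2t}\bigpar{\dd L(t)+2L(t)\dd t}=-2e^{2t}\dd M(t)$, so that $Y(t)=e^{2t}L(t)$ for $t<T^\ast$. Taking expectations gives $\E L(t)=(2m-1)e^{-2t}$, which already identifies the limit because $(2m-1)/n\to\gl$ by \eqref{mn}.

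\emph{Step 3 (a fixed horizon).} Fix $t_0>0$. The successive interdeath times are independent with distributions $\Exp(2m-1),\Exp(2m-3),\dots,\Exp(1)$, so $T^\ast\eqd\sum_{i=1}^m E_i/(2i-1)$ with the $E_i$ i.i.d.\ $\Exp(1)$; thus $\E T^\ast=\sum_{i=1}^m\tfrac1{2i-1}=\tfrac12\log m+O(1)\to\infty$ and $\Var T^\ast\le\pi^2/6$, whence $\P(T^\ast>t_0)\to1$ by Chebyshev. Since $L(s)\le L(0)\le 2m$, we have the pathwise bound $\langle Y\rangle(t_0)=4\int_0^{t_0}e^{4s}L(s)\dd s\le 8m\int_0^{t_0}e^{4s}\dd s=O(n)$, so Doob's $L^2$ inequality gives $\E\sup_{t\le t_0}\bigabs{Y(t)-Y(0)}^2=O(n)$ and hence $\sup_{t\le t_0}\bigabs{n\qw Y(t)-n\qw(2m-1)}=\Op(n\qqw)$. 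On the event $\{T^\ast>t_0\}$ we have $n\qw L(t)=e^{-2t}n\qw Y(t)$ on $[0,t_0]$ with $e^{-2t}\le1$; combining this with $(2m-1)/n\to\gl$ and $\P(T^\ast\le t_0)\to0$ yields $\sup_{t\le t_0}\bigabs{n\qw L(t)-\gl e^{-2t}}\pto0$.

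\emph{Step 4 (the tail, and conclusion).} Since $L$ is non-increasing, for $t\ge t_0$ both $n\qw L(t)$ and $\gl e^{-2t}$ lie in $\bigsqpar{0,\max\{n\qw L(t_0),\gl e^{-2t_0}\}}$, so $\sup_{t\ge t_0}\bigabs{n\qw L(t)-\gl e^{-2t}}\le\max\{n\qw L(t_0),\gl e^{-2t_0}\}$. Given $\eps>0$, choose $t_0$ with $\gl e^{-2t_0}<\eps$; then by Step~3 (applied at $t=t_0$) $n\qw L(t_0)<\eps$ \whp, so the tail supremum is $<\eps$ \whp, while the supremum over $[0,t_0]$ is $<\eps$ \whp by Step~3. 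Hence $\suptoo\bigabs{n\qw L(t)-\gl e^{-2t}}\pto0$. The step requiring the most care is Step~1: the bookkeeping that $L$ falls by exactly $2$ per spontaneous death (up to the single-unit discrepancy at $T^\ast$) and that the intensity of $R$ is exactly $L(t-)$. Everything afterwards is a routine martingale estimate together with the familiar device of upgrading convergence on a fixed $[0,t_0]$ to uniform convergence on $[0,\infty)$ by monotonicity.
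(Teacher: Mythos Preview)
Your proof is correct. It is essentially the paper's second approach (the one via \refL{LL+} and the death-process estimate \refL{L2}): there too one uses that $e^{2t}L(t)$ is a martingale and applies Doob's $L^2$ inequality on a bounded interval. Your write-up is more self-contained in two respects: you derive the martingale from the Doob--Meyer compensator of the counting process $R$ rather than from the binomial representation of the death process used in the paper's \refL{L2}, and you spell out the monotonicity/tail argument (Step~4) needed to pass from a fixed horizon $[0,t_0]$ to $[0,\infty)$, which the paper leaves implicit. The paper also points to a different route, via the Glivenko--Cantelli theorem for empirical distribution functions (from \cite{SJ184}); that approach avoids the martingale machinery altogether but is not reproduced in the paper, so your self-contained argument is a reasonable substitute.
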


\begin{proof}
  This (or rather an equivalent statement in a slightly different
  situation) was proved in \cite{SJ184} as a consequence of the
Glivenko--Cantelli theorem
\cite[Proposition 4.24]{Kallenberg}
on convergence of empirical distribution functions. It also follows
  easily from (the proof of) \refL{LL+} below if we replace $\gan$ by 1.
\end{proof}

Next consider the sleeping half-edges. Let $V_k(t)$ be the number of
sleeping \emph{vertices} of degree $k$ at time $t$; thus
\begin{equation*}
  S(t)=\sumki kV_k(t).
\end{equation*}
Note that \ref{Cii} does not affect sleeping half-edges, and that
\ref{Ciii} implies that each sleeping vertex of degree $k$ is eliminated
(\ie, awakened) with intensity $k$, independently of all other
vertices. There are also some sleeping vertices eliminated by
\ref{Ci}.

We first ignore the effect of \ref{Ci} by letting $\tvk(t)$ be the
number of vertices of degree $k$ such that
all its half-edges have maximal lifetimes $\tau_x>t$. (I.e.,
none of its $k$ half-edges would have died
spontaneously up to time $t$, assuming they all escaped \ref{Ci}.)
Let further $\tS(t)\=\sum_k k\tvk(t)$.

\begin{lemma}
  \label{LtS}
As \ntoo,
\begin{equation}\label{lts1}
\suptoo\bigabs{n\qw \tvk(t)-p_k\ee{-kt}} \pto0
\end{equation}
for every $k\ge0$ and
\begin{gather}\label{lts2}
\suptoo\biggabs{n\qw \sumk \tvk(t)-g(\ee{-t})} \pto0,
\\
\suptoo\bigabs{n\qw \tS(t)-h(\ee{-t})} \pto0.
\label{lts3}
\end{gather}
\end{lemma}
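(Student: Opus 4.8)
The plan is to derive everything from the Glivenko--Cantelli theorem on empirical distribution functions, exactly in the spirit of \refL{LL}. The key observation is that $\tvk(t)$ counts vertices of degree $k$ all of whose $k$ half-edges have maximal lifetime exceeding $t$; since the lifetimes $\tau_x$ are \iid{} $\Exp(1)$, a fixed vertex $i$ of degree $k$ is counted in $\tvk(t)$ with probability $\ee{-kt}$, and these events for distinct vertices are independent. So I would first handle a single $k$: write $n\qw\tvk(t)=n\qw\sum_{i:d_i=k}\prod_{x\in i}\ett{\tau_x>t}$, and for each such $i$ set $\sigma_i\=\min_{x\in i}\tau_x$, which has an $\Exp(k)$ distribution. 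Then $n\qw\tvk(t)=\frac{n_k}n\cdot\frac1{n_k}\#\set{i:d_i=k,\ \sigma_i>t}$, i.e.\ $\frac{n_k}n$ times one minus the empirical distribution function of the $n_k$ \iid{} $\Exp(k)$ variables $\sigma_i$, evaluated at $t$. By Glivenko--Cantelli this empirical d.f.\ converges uniformly (a.s., hence in probability) to $1-\ee{-kt}$, and since $n_k/n\to p_k$ by \refCC{C1p}, we get \eqref{lts1}; note the supremum over $t\ge0$ is genuinely covered because Glivenko--Cantelli is uniform over the whole line and $t\mapsto\ee{-kt}$ is continuous and monotone. (When $k=0$ the statement is trivial: $\tvk[0](t)\equiv n_0$ and $\ee{-0\cdot t}=1$.)

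Next I would pass from individual $k$ to the sums $\sumk\tvk(t)$ and $\tS(t)=\sumk k\tvk(t)$. The obstacle here is that these are infinite sums and one cannot simply add up countably many ``$\op(1)$'' statements; uniformity in $k$ is needed. I would control the tails using the second-moment bound \refCC{C1d2}: for any fixed $K$,
\begin{equation*}
0\le n\qw\sum_{k>K}\tvk(t)\le n\qw\sum_{k>K} V_k(0)=n\qw\sum_{k>K}n_k
\le \frac1{K}\,n\qw\sum_{k>K}k\,n_k\le\frac1K\,n\qw\sumin d_i,
\end{equation*}
and similarly $n\qw\sum_{k>K}k\tvk(t)\le n\qw\sum_{k>K}k\,n_k\le K\qw n\qw\sumin d_i^2$, both uniformly in $t$; by \eqref{mn} and \refCC{C1d2} these are $O(1/K)$ and can be made $<\eps$ by choosing $K$ large. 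On the limit side the tails $\sum_{k>K}p_k\ee{-kt}$ and $\sum_{k>K}kp_k\ee{-kt}$ are bounded by $\sum_{k>K}p_k$ and $\sum_{k>K}kp_k$, which are small for large $K$ since $\gl=\sum kp_k<\infty$. Then for the finitely many $k\le K$ I apply \eqref{lts1} (a finite sum of $\op(1)$'s, with the weight $k\le K$ bounded), and combine via a standard $\eps/3$ argument to obtain \eqref{lts2} and \eqref{lts3}.

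The main point of care, and the only real subtlety, is the interchange of the uniform supremum in $t$ with the summation over $k$: this is handled precisely by the uniform tail bounds above, which hold simultaneously for all $t\ge0$. Everything else is routine. I expect the proof to be short, citing \cite[Proposition 4.24]{Kallenberg} for Glivenko--Cantelli just as in \refL{LL}, and noting that in fact this lemma immediately implies \refL{LL} on the event of no applications of \ref{Ci} so far, which is why \refL{LL} was stated first.
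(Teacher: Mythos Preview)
Your proof is correct and follows essentially the same approach as the paper: Glivenko--Cantelli for \eqref{lts1}, then a truncation argument for \eqref{lts2}--\eqref{lts3} using a tail bound on $\sum_{k>K}kn_k/n$ (the paper phrases this as uniform integrability of $D_n$, you derive it directly from \refCC{C1d2}, but these are the same thing). One small slip: in your tail estimate you write $\tvk(t)\le V_k(0)$, but $V_k(0)$ already reflects the first application of \ref{Ci}; the correct (and intended) bound is $\tvk(t)\le\tvk(0)=n_k$, which is what you use immediately after. Your closing remark that this lemma implies \refL{LL} is not right---$L(t)$ and $\tS(t)$ are governed by different dynamics even before any \ref{Ci} occurs---but this is a side comment and does not affect the proof.
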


\begin{proof}
  The statement \eqref{lts1}, again, follows from the
Glivenko--Cantelli theorem, see \cite{SJ184}, or from the proof of
\refL{LtS+} below. (The case $k=0$ is trivial, with $\tV_0(t)=n_0$ for
all $t$.)

By \refR{RC1}, $D_n$ are uniformly integrable, which means that for
every $\eps>0$ there exists $K<\infty$ such that for all $n$
$\sum_{k>K} kn_k/n = \E(D_n;\,D_n>K)<\eps$.
We may further assume (or deduce by Fatou's inequality)
$\sum_{k>K} p_k<\eps$, and obtain by \eqref{lts1} \whp
\begin{equation*}
  \begin{split}
\suptoo\bigabs{n\qw \tS(t)&-h(\ee{-t})}
=
\suptoo\biggabs{\sumki k\bigpar{n\qw \tvk(t)-p_k\ee{-kt}}}
\\&
\le
\sum_{k=1}^K k\suptoo\bigabs{n\qw \tvk(t)-p_k\ee{-kt}}
 + \sum_{k>K} k\Bigpar{\frac{n_k}{n}+p_k}
\\&
\le\eps+\eps+\eps,
  \end{split}
\end{equation*}
proving \eqref{lts3}. An almost identical argument yields \eqref{lts2}.
\end{proof}

The difference between $S(t)$ and $\tS(t)$ is easily estimated.

\begin{lemma}
  \label{LSS}
If $\dmax\=\max_id_i$ is the maximum degree of \gndx, then
\begin{equation*}
0\le \tS(t)-S(t) <
\sup_{0\le s\le t} \bigpar{\tS(s)-L(s)}+\dmax.
\end{equation*}
\end{lemma}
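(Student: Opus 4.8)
The plan is to track, for each vertex, the first time one of its half-edges would die spontaneously. For a vertex $v$ let $\mu_v$ denote the minimum of the maximal lifetimes $\tau_x$ over the half-edges $x$ at $v$ (with $\mu_v=+\infty$ if $v$ has degree $0$).

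First I would record the dictionary between the state of the algorithm and the $\tau_x$'s. A half-edge still sleeping at time $t$ has never been active, hence was never killed in step~\ref{Cii}; since it was continuously living on $[0,t]$ it cannot have died spontaneously by $t$ either, and as spontaneous deaths are processed in increasing order of the $\tau_x$ this forces $\tau_x>t$. Hence a vertex still sleeping at time $t$ has all its half-edges sleeping, so $\mu_v>t$; conversely, if $\mu_v>t$ and $v$ has not been selected by a step~\ref{Ci}, then its half-edges stay sleeping on $[0,t]$ (such a half-edge leaves this state only by becoming active, which needs $v$ awake, or by dying spontaneously at its lifetime $>t$, while $v$ is awoken via step~\ref{Ciii} only at time $\mu_v>t$), so $v$ is still sleeping at $t$. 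On the other hand $v$ is counted in $\tvk(t)$ (with $k=d(v)$) exactly when $\mu_v>t$. Therefore $V_k(t)\le\tvk(t)$ for every $k$, which already gives $\tS(t)-S(t)\ge0$; and if $0=\sigma_0<\sigma_1<\cdots$ are the times at which step~\ref{Ci} is performed, with $w_0,w_1,\dots$ the selected vertices, then
\[
  \tS(t)-S(t)=\sum_{j:\ \sigma_j\le t} d(w_j)\,\ett{\mu_{w_j}>t},
\]
since the vertices awoken by step~\ref{Ci} up to time $t$ are precisely $\set{w_j:\sigma_j\le t}$.

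Next I would localize to the current component. Let $\sigma=\sigma_J$ be the largest step-\ref{Ci} time $\le t$ and $w=w_J$. As $w$ is sleeping just before $\sigma$, and (almost surely) the $\tau_x$ are distinct, $\mu_w>\sigma$. For $j<J$ the component rooted at $w_j$ has been fully explored by time $\sigma_{j+1}\le\sigma$, so every half-edge at $w_j$ is dead by then; if also $\mu_{w_j}>t\ge\sigma_{j+1}$, none of them died spontaneously, so all were killed in step~\ref{Cii}; in particular $\set{j<J:\mu_{w_j}>t}\subseteq\set{j<J:\mu_{w_j}>\sigma}$. Using $L=S+A$, and that just after step~\ref{Ci} awakens $w$ (making its $d(w)$ half-edges active) and the ensuing step~\ref{Cii} kills one of them we have $A(\sigma)=d(w)-1$, the displayed identity at time $\sigma$ gives
\[
  \tS(\sigma)-L(\sigma)=\bigpar{\tS(\sigma)-S(\sigma)}-A(\sigma)
  \ge\Bigpar{d(w)+\sum_{j<J:\ \mu_{w_j}>t} d(w_j)}-\bigpar{d(w)-1}
  =1+\sum_{j<J:\ \mu_{w_j}>t} d(w_j).
\]
The same identity at time $t$ gives $\tS(t)-S(t)=d(w)\ett{\mu_w>t}+\sum_{j<J:\ \mu_{w_j}>t}d(w_j)\le\dmax+\sum_{j<J:\ \mu_{w_j}>t}d(w_j)$, and combining,
\[
  \tS(t)-S(t)\le\dmax+\bigpar{\tS(\sigma)-L(\sigma)}-1<\dmax+\sup_{0\le s\le t}\bigpar{\tS(s)-L(s)},
\]
which is the claim.

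The main obstacle is the first step: making precise, from the event-driven construction of $\gndx$, that a half-edge still sleeping at time $t$ necessarily has lifetime exceeding $t$ (and the companion characterization of still-sleeping vertices by $\mu_v>t$ together with not having been chosen in a step~\ref{Ci}). This is the only place where the exact semantics of the algorithm — spontaneous deaths taken in order of the $\tau_x$, step~\ref{Cii} acting only on active half-edges — are really used; everything afterwards is bookkeeping, modulo the harmless almost-sure fact that the $\tau_x$ are distinct, which enters only through $\mu_w>\sigma$.
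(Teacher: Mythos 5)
Your argument is correct and rests on the same two facts the paper's proof uses: $\tS-S$ changes only through step C1, and at a C1 time where a vertex of degree $j\ge1$ is awoken one has $A=j-1<\dmax$, hence $\tS-S=(\tS-L)+A<(\tS-L)+\dmax$. You package these observations as the explicit representation $\tS(t)-S(t)=\sum_{j:\sigma_j\le t}d(w_j)\ett{\mu_{w_j}>t}$, compared at $t$ and at the last C1 time $\sigma_J\le t$, while the paper instead runs the equivalent local monotonicity argument (no increase of $\tS-S$ between C1 events, bound at the last C1 event) directly on the increments; the two routes are the same in substance.
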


\begin{proof}
Clearly, $V_k(t)\le\tvk(t)$, and thus $S(t)\le\tS(t)$; furthermore,
$\tS(t)-S(t)$ increases only as a result of \ref{Ci}, which acts to
guarantee that $A(t)=L(t)-S(t)\ge0$.

If \ref{Ci} is performed at time $t$ and a vertex of degree $j>0$ is awakened,
then \ref{Cii} applies instantly and we  have
$A(t)=j-1<\dmax$, and consequently
\begin{equation}\label{ss}
  \tS(t)-S(t)
=
  \tS(t)-L(t)+A(t)
< \tS(t)-L(t)+\dmax.
\end{equation}
Furthermore,
$\tS(t)-S(t)$ is never changed by \ref{Cii} and either unchanged or
decreased by \ref{Ciii}.
Hence, $\tS(t)-S(t)$ does not increase until the next time \ref{Ci} is
performed. Consequently, for any time $t$, if $s$ was the last time
before (or equal to) $t$ that \ref{Ci} was performed, then
$\tS(t)-S(t)\le   \tS(s)-S(s)$, and the result follows by \eqref{ss}.
\end{proof}

Let
\begin{equation}\label{ta}
  \tA(t)\=L(t)-\tS(t)
=A(t)-\bigpar{\tS(t)-S(t)}.
\end{equation}
Then, by Lemmas \refand{LL}{LtS} and \eqref{key-funct-1},
\begin{equation}\label{tah}
\suptoo\bigabs{n\qw \tA(t)-H(\ee{-t})} \pto0.
\end{equation}
\refL{LSS} can be written
\begin{equation}
  \label{tssa}
0\le \tS(t)-S(t)<- \inf_{s\le t}\tA(s)+\dmax.
\end{equation}

\begin{remark}
  By \eqref{ta} and \eqref{tssa}, we obtain further the relation
\begin{equation*}
  \tA(t) \le A(t)
< \tA(t) - \inf_{s\le t}\tA(s)+\dmax
\end{equation*}
which, perhaps, illuminates the relation between $A(t)$ and $\tA(t)$.
\end{remark}

\begin{lemma}
  \label{LH}
Suppose that \refCN{C1} holds and let $H(x)$ be given by \eqref{key-funct-1}.
  \begin{romenumerate}
\item\label{LHa}
If\/ $\E D(D-2)=\sum_k k(k-2)p_k>0$, then there is a unique $\xi\in(0,1)$
such that $H(\xi)=0$; 
moreover,
$H(x)<0$ for $x\in(0,\xi)$ and
$H(x)>0$ for $x\in(\xi,1)$.
\item\label{LHb}
If\/ $\E D(D-2)=\sum_k k(k-2)p_k\le0$, then
$H(x)<0$ for $x\in(0,1)$.
  \end{romenumerate}
\end{lemma}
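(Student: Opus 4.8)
The plan is to analyze $H(x)=\gl x^2-h(x)$ on $[0,1]$ using convexity of the power series $h$. First I would record the elementary facts: $h(0)=0$, $h(1)=\gl$, so $H(0)=H(1)=0$; and $h$ is a power series with non-negative coefficients $kp_k$ ($k\ge1$), hence $h$ is convex and increasing on $[0,1]$. The key auxiliary function is $F(x)\=H(x)/x = \gl x - g'(x)$ for $x\in(0,1]$ (noting $h(x)=xg'(x)$), which has the same sign as $H(x)$ on $(0,1)$. Since $g'(x)=\sum_{k\ge1}kp_kx^{k-1}$ is a convex increasing function on $[0,1]$, the function $F(x)=\gl x-g'(x)$ is concave on $[0,1]$. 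Moreover $F(1)=\gl-g'(1)=\gl-\gl=0$, and $F'(1)=\gl-g''(1)=\gl-\sum_k k(k-1)p_k = -\E D(D-2) - \text{(correction)}$; more precisely $F'(1)=\gl - (\E D^2 - \E D) = 2\gl - \E D^2 = -\,\E D(D-2)$.

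Now the two cases follow from concavity of $F$. In case \ref{LHa}, $\E D(D-2)>0$ means $F'(1)<0$, so $F$ is strictly decreasing near $x=1$ from the left, hence $F(x)>F(1)=0$ for $x$ slightly less than $1$; combined with $F(0^+)=\lim_{x\downto0}(\gl x - g'(x)) = -g'(0)=-p_1<0$ (here \refCC{C1p1} is used), the concave function $F$ passes from negative to positive and, being concave with $F(1)=0$, can have at most one zero in $(0,1)$ — indeed a concave function that is negative at $0^+$, positive somewhere in $(0,1)$, and zero at $1$ must cross zero exactly once in $(0,1)$. Call this zero $\xi$; then $F<0$ on $(0,\xi)$ and $F>0$ on $(\xi,1)$, giving the stated sign pattern for $H$. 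In case \ref{LHb}, $\E D(D-2)\le0$ means $F'(1)\ge0$. If $F$ had a zero in $(0,1)$, then since $F(0^+)=-p_1<0$ and $F(1)=0$, concavity would force $F'(1)<0$, a contradiction; more carefully, a concave $F$ with $F(1)=0$ and $F'(1)\ge0$ satisfies $F(x)\ge F(1)+(x-1)F'(1)\ge 0$ is the wrong direction, so instead use: concave $F$ lies below its tangent at $1$, i.e. $F(x)\le F(1)+(x-1)F'(1)=(x-1)F'(1)\le 0$ for $x\in(0,1)$, with equality only if $F$ is linear; since $F(0^+)=-p_1<0$ strictly, $F$ is not the zero function, and strict concavity (or the strict inequality at $0$) gives $F(x)<0$ on $(0,1)$, hence $H(x)<0$ there.

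The main obstacle is making the concavity argument fully rigorous at the endpoint $x=1$, since $H'(1)$ involves $\sum_k k^2 p_k=\E D^2$ which need not be finite under \refCN{C1} alone. If $\E D^2=\infty$ then $g'$ is still finite and continuous on $[0,1]$ but $g''(1^-)=+\infty$, so $F'(1^-)=-\infty<0$, which actually places us in case \ref{LHa} (consistent with $\E D(D-2)=+\infty>0$); one should phrase the argument in terms of one-sided derivatives (or difference quotients $[F(1)-F(x)]/(1-x)$, which increase to $F'(1^-)\in(-\infty,+\infty]$ as $x\uparrow1$ by convexity of $g'$) rather than assuming $F'(1)$ is a finite number, and split on the sign of this limit. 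A clean alternative avoiding derivatives entirely: write $H(x)=\gl x^2 - h(x)$ and note $\gl x^2 - h(x) = \sum_{k\ge1} k p_k (x^2 - x^k)$ using $\gl=\sum k p_k$; then $x^2-x^k>0$ for $k=1$, $=0$ for $k=2$, and $<0$ for $k\ge3$, all on $x\in(0,1)$. This exhibits $H$ as a difference of a positive term ($k=1$) and non-negative terms ($k\ge3$), and one analyzes the ratio; reducing to a single monotone crossing then rests on showing $H(x)/\bigl(\sum_{k\ge3}kp_k(x^k-x^2)\bigr)$ is monotone, which again follows from a convexity/log-convexity comparison. I would present the derivative-based argument with one-sided derivatives as the primary route, since it is shortest, and use \refCC{C1p1} ($p_1>0$) precisely to get $F(0^+)=-p_1<0$, which is what forces the zero into the open interval $(0,1)$ rather than allowing it to escape to an endpoint.
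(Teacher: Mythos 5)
Your proof is correct and takes essentially the same approach as the paper: both introduce the concave auxiliary function $H(x)/x = \gl x - g'(x)$, use $p_1>0$ to get negativity near $0$, and argue from concavity together with the sign of the derivative at $1$ (which equals $-\E D(D-2)$) to locate and count zeros. Your extra discussion of one-sided derivatives when $\E D^2=\infty$ is a reasonable point of care, though unnecessary for case (ii) since $\E D(D-2)\le 0$ already forces $\E D^2<\infty$; in case (i) the difference-quotient phrasing handles the infinite-derivative situation just as the paper's argument implicitly does.
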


\begin{proof}
As remarked earlier,
$H(0)=H(1)=0$ and
$H'(1)= - \E D(D-2)$.
Furthermore, if we define $\phi(x)\=H(x)/x$,
then  $\phi(x)=\gl x - \sum_k kp_k x^{k-1}$ is a concave function on
  $(0,1]$, and it is strictly concave unless
$p_k=0$ for all $k\ge3$,
in which case $H'(1)=-\E D(D-2)=p_1>0$.

In case \ref{LHb}, we thus have $\phi$ concave and
$\phi'(1)=H'(1)-H(1)\ge0$, with either the concavity or the inequality strict,
and thus $\phi'(x)>0$ for all $x\in(0,1)$,
whence $\phi(x)<\phi(1)=0$ for $x\in(0,1)$.

In case \ref{LHa}, $H'(1)<0$, and thus
$H(x)>0$ for $x$ close to 1.
Further, $H'(0)=-h'(0)=-p_1<0$,
and thus
$H(x)<0$ for $x$ close to 0.
Hence there is at least one $\xi\in(0,1)$
with $H(\xi)=0$, and since $H(x)/x$ is strictly concave and also
$H(1)=0$, there is at most one such $\xi$ and the result follows.
\end{proof}

\begin{proof}[Proof of \refT{T1}\ref{T1a}]
Let $\xi$ be the zero of $H$ given by \refL{LH}\ref{LHa} and let
$\tau\=-\ln\xi$. Then, by \refL{LH},
$H(\eet)>0$ for $0<t<\tau$, and thus
$\inf_{t\le\tau} H(\eet)=0$.
Consequently, \eqref{tah} implies
\begin{equation}\label{tc}
{n\qw\inf_{t\le\tau} \tA(t)}
=
{\inf_{t\le\tau}n\qw \tA(t)-
\inf_{t\le\tau} H(\eet)} \pto0.
\end{equation}

Further, by \refCN{C1}\ref{C1d2}, $\dmax=O(n\qq)$, and thus $n\qw\dmax\to0$.
Consequently, \eqref{tssa} and \eqref{tc} yield
\begin{equation}
  \label{tc1}
\sup_{t\le\tau}n\qw\bigabs{ A(t)-\tA(t)}
=
\sup_{t\le\tau}n\qw\bigabs{ \tS(t)-S(t)}
\pto0
\end{equation}
and thus, by \eqref{tah},
\begin{equation}\label{tc2}
  \sup_{t\le\tau}\bigabs{n\qw A(t)-H(\ee{-t})} \pto0.
\end{equation}

Let $0<\eps<\tau/2$. Since $H(\eet)>0$ on the compact interval
$[\eps,\tau-\eps]$, \eqref{tc2} implies that \whp{} $A(t)$ remains
positive on $[\eps,\tau-\eps]$, and thus no new component is started
during this interval.

On the other hand, again by \refL{LH}\ref{LHa},
$H(\ee{-\tau-\eps})<0$ and \eqref{tah} implies
$n\qw\tA(\tau+\eps)\pto H(\ee{-\tau-\eps})$,
while $A(\tau+\eps)\ge0$.
Thus, with $\gd\=|H(\ee{-\tau-\eps})|/2>0$, \whp
\begin{equation}  \label{sste}
  \tS(\tau+\eps)- S(\tau+\eps)
=
 A(\tau+\eps) -  \tA(\tau+\eps)
\ge -\tA(\tau+\eps)
>n\gd,
\end{equation}
while \eqref{tc1} yields $\tS(\tau)-S(\tau)<n\gd$ \whp.
Consequently, \whp{}
$  \tS(\tau+\eps)- S(\tau+\eps)>  \tS(\tau)- S(\tau)$, so
\ref{Ci} is performed between $\tau$ and $\tau+\eps$.

Let $T_1$ be the last time \ref{Ci} was performed before $\tau/2$ and
let $T_2$ be the next time it is performed. We have shown that for any
$\eps>0$, \whp{} $0\le T_1\le \eps$ and $\tau-\eps\le T_2\le \tau+\eps$;
in other words, $T_1\pto0$ and $T_2\pto \tau$.

We state the next step as a lemma that we will reuse.

\begin{lemma}
  \label{LC}
Let\/ $T_1^*$ and $T_2^*$ be two (random) times when \ref{Ci} are performed,
with $T^*_1\le T^*_2$, and assume that $T^*_1\pto t_1$ and $T^*_2\pto
t_2$ where
$0\le t_1\le t_2\le \tau$.
If\/ $C^*$ is the union of all components explored between $T^*_1$ and
$T^*_2$, then
\begin{align}
  v_k(C^*)/n &\pto p_k\bigpar{\eez{kt_1}-\eez{kt_2}},
\qquad k\ge0,
\label{lcvk} \\
  v(C^*)/n &\pto g(\eez{t_1})-g(\eez{t_2}),
\label{lcv} \\
  e(C^*)/n &\pto \thalf h(\eez{t_1})-\thalf h(\eez{t_2}).
\label{lce}
\end{align}
In particular, if $t_1=t_2$, then $v(C^*)/n\pto0$ and $e(C^*)/n\pto0$.
\end{lemma}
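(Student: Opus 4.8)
The plan is to track, for each degree $k$, how many vertices of degree $k$ are awakened in the interval $[T_1^*,T_2^*]$, and to show this count is concentrated around $n p_k(\eez{kt_1}-\eez{kt_2})$. The natural quantity is $V_k(t)$, the number of \emph{sleeping} vertices of degree $k$ at time $t$: then $v_k(C^*) = V_k(T_1^*-)-V_k(T_2^*)$ (the vertices of degree $k$ asleep just before $T_1^*$ minus those still asleep at $T_2^*$), up to the $O(1)$ ambiguity of the vertices selected by \ref{Ci} at the two endpoints, which is negligible after dividing by $n$. So the key input is a Glivenko--Cantelli-type uniform approximation $\suptoo\bigabs{n\qw V_k(t)-p_k\eez{kt}}\pto0$.

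First I would establish that approximation by combining \refL{LtS} with \refL{LSS}. We already have $\suptoo\bigabs{n\qw \tvk(t)-p_k\eez{kt}}\pto0$ from \eqref{lts1}, and $0\le \tvk(t)-V_k(t)$; moreover the total discrepancy $\sum_k k(\tvk(t)-V_k(t)) = \tS(t)-S(t)$ is controlled by \eqref{tssa}, namely $0\le\tS(t)-S(t)<-\inf_{s\le t}\tA(s)+\dmax$. Restricting to $t\le\tau$, the computation \eqref{tc}--\eqref{tc1} in the proof of \refT{T1}\ref{T1a} gives $\sup_{t\le\tau}n\qw(\tS(t)-S(t))\pto0$; since $\tvk(t)-V_k(t)\ge0$ and $k(\tvk(t)-V_k(t))\le \tS(t)-S(t)$ for each fixed $k\ge1$, we get $\sup_{t\le\tau}n\qw(\tvk(t)-V_k(t))\pto0$, hence $\sup_{t\le\tau}\bigabs{n\qw V_k(t)-p_k\eez{kt}}\pto0$ for every $k\ge0$ (the case $k=0$ being trivial, $V_0\equiv n_0$). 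Evaluating at $t=T_1^*\pto t_1$ and $t=T_2^*\pto t_2$, and using continuity of $x\mapsto p_k\eez{kx}$ together with $0\le t_1\le t_2\le\tau$, yields \eqref{lcvk} (right-continuity versus left limits at the jump points contributes only $O(1)$, absorbed into $\op(n)$).

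For \eqref{lcv} I would sum \eqref{lcvk} over $k$; the uniform-integrability tail bound from \refR{RC1} (for every $\eps$ there is $K$ with $\sum_{k>K}kn_k/n<\eps$ and $\sum_{k>K}p_k<\eps$, hence also $\sum_{k>K}n_k/n<\eps$ for $n$ large) lets me truncate the sum at $K$ with an $\op(n)$ error, exactly as in the proof of \eqref{lts2}--\eqref{lts3}; the finite sum $\sum_{k\le K}p_k(\eez{kt_1}-\eez{kt_2})$ converges to $g(\eez{t_1})-g(\eez{t_2})$ up to $\eps$ as $K\to\infty$ by definition \eqref{key-funct-2} of $g$. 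For \eqref{lce}, note $2e(C^*)=\sum_k k\,v_k(C^*)=S(T_1^*-)-S(T_2^*)$ minus the $\le 2\dmax=O(n\qq)=\op(n)$ half-edges activated at the two \ref{Ci} steps; then $n\qw S(t)\to h(\eez{t})$ uniformly on $[0,\tau]$ follows from \eqref{lts3} and $\sup_{t\le\tau}n\qw(\tS(t)-S(t))\pto0$, and evaluating at the two stopping times gives $\thalf h(\eez{t_1})-\thalf h(\eez{t_2})$. Finally, if $t_1=t_2$ the right-hand sides of \eqref{lcv} and \eqref{lce} vanish, giving the last sentence. The main obstacle is purely bookkeeping: correctly handling the discrepancy between the idealized counts $\tvk,\tS$ and the true counts $V_k,S$, and making sure the $O(\dmax)$ and right-continuity corrections at the endpoints $T_1^*,T_2^*$ are genuinely $\op(n)$ — both of which are already in hand via \refL{LSS}, \eqref{tssa}, and $\dmax=O(n\qq)$ from \refCC{C1d2}.
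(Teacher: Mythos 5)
Your overall strategy matches the paper's: reduce to the idealized counts $\tvk$ via \refL{LtS}, control the discrepancy $\tvk-V_k$ through $\tS-S$ and \eqref{tssa}, then evaluate at the endpoints and sum over $k$ with a uniform-integrability truncation. There is, however, one genuine gap in how you control the discrepancy at the random times $T^*_1,T^*_2$.

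You propose to use the \emph{fixed-interval} bound $\sup_{t\le\tau}n\qw\bigpar{\tS(t)-S(t)}\pto0$ from \eqref{tc}--\eqref{tc1}, and then to ``evaluate at $T^*_j\pto t_j$ using continuity.'' But $T^*_2$ is only known to converge in probability to $t_2\le\tau$; when $t_2=\tau$ (which is precisely the case needed for the main component $\pC$, and also for $\ppC$), $T^*_2$ can exceed $\tau$ with non-vanishing probability, and a bound over $[0,\tau]$ then says nothing about $\tS(T^*_2-)-S(T^*_2-)$. Nor can you patch this by enlarging to a fixed $[0,\tau+\eps]$: for $t>\tau$ we have $H(\ee{-t})<0$, so by \eqref{tah} and \eqref{tssa} the quantity $\sup_{t\le\tau+\eps}\bigpar{\tS(t)-S(t)}$ is of order $n|H(\ee{-\tau-\eps})|$, i.e.\ $\Theta(n)$, not $\op(n)$. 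Monotonicity of $V_k$ gives only the upper bound $V_k(T^*_2-)\le\tvk(T^*_2-)$; for the matching lower bound you still need $\tvk(T^*_2-)-V_k(T^*_2-)=\op(n)$, which requires control of $\tS-S$ at the random time itself. The paper's proof closes exactly this gap by establishing the bound over the \emph{random} interval $[0,T^*_2]$ (its equation \eqref{sst2}): one uses that $s\mapsto\inf_{t\le s}H(\ee{-t})$ is continuous and equals $0$ for $s\le\tau$, so by the continuous mapping theorem $\inf_{t\le T^*_2}H(\ee{-t})\pto\inf_{t\le t_2}H(\ee{-t})=0$, and then \eqref{tah} and \eqref{tssa} give $\sup_{t\le T^*_2}n\qw|\tS(t)-S(t)|\pto0$. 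Once you replace your fixed-interval bound by this random-interval bound, the rest of your argument (the degree-$k$ identity, the truncation for \eqref{lcv}, and the half-edge count for \eqref{lce}) goes through as in the paper.

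A minor bookkeeping remark: $v_k(C^*)=V_k(T^*_1-)-V_k(T^*_2-)$ exactly (the vertex woken by step \ref{Ci} at $T^*_2$ belongs to the \emph{next} component, so the left limit at $T^*_2$ is the right endpoint), so the ``up to $O(1)$'' hedge you introduce is unnecessary, though harmless.
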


\begin{proof}
$C^*$ contains all vertices awakened in the interval $[T^*_1,T^*_2)$
and no others, and thus
\begin{equation}\label{td}
v_k(C^*)=V_k(T^*_1-)-V_k(T^*_2-),
\qquad k\ge1.
\end{equation}

Since $T^*_2\pto t_2\le\tau$ and $H$ is continuous,
$
\inf_{t\le T^*_2} H(t) \pto
\inf_{t\le t_2} H(t)=0$,
and \eqref{tah} and \eqref{tssa} imply, in analogy with \eqref{tc} and
\eqref{tc1},
$n\qw \inf_{t\le T^*_2} \tA(t)\pto0$ and
\begin{equation}
  \label{sst2}
\sup_{t\le T^*_2} n\qw|\tS(t)-S(t)|\pto0.
\end{equation}

Since $\tV_j(t)\ge V_j(t)$ for every $j$ and $t\ge0$,
\begin{equation}
  \label{td1}
\tvk(t)-V_k(t)\le k\qw\sumji j\bigpar{\tV_j(t)-V_j(t)}
=k\qw\bigpar{\tS(t)-S(t)},
\qquad k\ge1.
\end{equation}
Hence \eqref{sst2} implies, for every $k\ge1$,
$\sup_{t\le T^*_2}|\tvk(t)-V_k(t)|=\op(n)$.
This is further trivially true for $k=0$ too.
Consequently, using  \refL{LtS}, for $j=1,2$,
\begin{equation}
  \label{stj}
V_k(T^*_j-)
=
\tV_k(T^*_j-)+\op(n)
=np_k\eez{kT^*_j}+\op(n)
=np_k\eez{kt_j}+\opn,
\end{equation}
and \eqref{lcvk} follows by \eqref{td}.
Similarly, using $\sumk(\tvk(t)-V_k(t))\le\tS(t)-S(t)$,
\begin{equation*}
  \begin{split}
  v(C^*)
&=\sumki\bigpar{V_k(T^*_1-)-V_k(T^*_2-)}
=\sumki\bigpar{\tvk(T^*_1-)-\tvk(T^*_2-)}+\opn
\\&
=ng(\eez{T^*_1})-ng(\eez{T^*_2})+\opn
  \end{split}
\end{equation*}
and
\begin{equation*}
  \begin{split}
  2e(C^*)
&=\sumki k\bigpar{V_k(T^*_1-)-V_k(T^*_2-)}
=\sumki k\bigpar{\tvk(T^*_1-)-\tvk(T^*_2-)}+\opn
\\&
=nh(\eez{T^*_1})-nh(\eez{T^*_2})+\opn,
  \end{split}
\end{equation*}
and \eqref{lcv} and \eqref{lce} follow.
\end{proof}

Let $\pC$ be the component created at $T_1$ and explored until
$T_2$.
By \refL{LC}, with $t_1=0$ and $t_2=\tau$,
\begin{align}
v_k(\pC)/n&\pto p_k(1-\eez{k\tau}),
\label{vc'k}
\\
v(\pC)/n&\pto g(1)-g(\eez{\tau})=1-g(\xi),
\label{vc'v}
\\
e(\pC)/n
&\pto
\tfrac12\bigpar{h(1)-h(\eez\tau)}
=\tfrac12\bigpar{h(1)-h(\xi)}
=\frac\gl2(1-\xi^2),
\label{vc'e}
\end{align}
using \eqref{key-funct-1} and $H(1)=H(\xi)=0$.

We have found one large component $\pC$ with the claimed numbers of
vertices and edges. It remains to show that there is \whp{} no other
large component.
Therefore,
let $T_3$ be the first time after $T_2$ that \ref{Ci} is
performed. Since $\tS(t)-S(t)$ increases by at most $\dmax=\opn$
each time \ref{Ci} is performed, we obtain from \eqref{sst2} that
\begin{equation*}
\sup_{t\le T_3}\bigpar{\tS(t)-S(t)}
\le
\sup_{t\le T_2}\bigpar{\tS(t)-S(t)}+\dmax=\opn.
\end{equation*}
Comparing this to \eqref{sste} we see that for every $\eps>0$,
\whp{} $\tau+\eps>T_3$. Since also $T_3>T_2\pto\tau$, it follows that
$T_3\pto\tau$. If $\ppC$ is the component created between $T_2$ and
$T_3$, then
\refL{LC} applied to $T_2$ and $T_3$ yields
$v(\ppC)/n\pto0$
and
$e(\ppC)/n\pto0$.

Next, let $\eta>0$.
Applying
\refL{LC} to $T_0\=0$ and $T_1$, we see that
the total number of vertices and edges
in all components found before $\pC$, \ie, before $T_1$, is
$\opn$, because $T_1\pto0$.
Hence, recalling $m=\Theta(n)$ by \eqref{mn},
\begin{equation}\label{cc1}
 \P(\text{a component $\cC$ with $e(\cC)\ge\eta m$ is found before $\pC$})
\to0.
\end{equation}
On the other hand,
conditioning on the final graph \gndx{} that is constructed by the
algorithm,
if there exists a component $\cC\neq\pC$ in \gndx{}
with at least $\eta m$ edges that has not been found before $\pC$, then with
probability at least $\eta$, the vertex chosen at random by \ref{Ci}
at $T_2$
starting the component $\ppC$ belongs to $\cC$, and thus
$\cC=\ppC$. Consequently,
\begin{multline}\label{cc2}
 \P(\text{a component $\cC$ with $e(\cC)\ge\eta m$ is found after $\pC$})
\\
\le
\eta\qw
\P(e(\ppC)\ge\eta m)
    \to0.
\end{multline}
Combining \eqref{cc1} and \eqref{cc2}, we see that \whp{} there is no
component except $\pC$ with at least $\eta m$ edges.
Taking $\eta$
small, this and \eqref{vc'e} show that \whp{} $\pC=\cC_1$, the largest
component, and
further $e(\cC_2)<\eta m$.
Consequently, the results for $\cC_1$ follow from
\eqref{vc'k}--\eqref{vc'e}. We have further shown
$e(\cC_2)/m\pto0$, which implies
$e(\cC_2)/n\pto0$ and $v(\cC_2)/n\pto0$ because $m=O(n)$ and
$v(\cC_2)\le e(\cC_2)+1$.
\end{proof}

\begin{proof}[Proof of \refT{T1}\ref{T1b}]
This is very similar to the last step in the proof for \ref{T1a}.
Let $T_1=0$ and let $T_2$ be the next time \ref{Ci} is performed.
Then
\begin{equation}
  \label{sofie}
\sup_{t\le T_2}\bigabs{A(t)-\tA(t)}
=
\sup_{t\le T_2}\bigabs{\tS(t)-S(t)}
\le 2\dmax=o(n).
\end{equation}
For every $\eps>0$, we have by \eqref{tah} and \refL{LH}\ref{LHb}
$n\qw \tA(\eps)\pto H(\eez{\eps})<0$, while $A(\eps)\ge0$, and it
follows from \eqref{sofie} that \whp{} $T_2<\eps$. Hence, $T_2\pto0$.
We apply \refL{LC} (which holds in this case too, with $\tau=0$) and
find that if $\tC$ is the first component, then $e(\tC)/n\pto0$.

Let $\eps>0$. If $e(\cC_1)\ge\eps m$, then the probability that the
first half-edge chosen by \ref{Ci} belongs to $\cC_1$, and thus
$\tC=\cC_1$, is $2e(\cC_1)/(2m)\ge\eps$, and hence, using
$m=\Theta(n)$ by \eqref{mn},
\begin{equation*}
  \P\bigpar{e(\cC_1)\ge\eps m}
\le \eps\qw   \P\bigpar{e(\tC)\ge\eps m}\to0.
\end{equation*}
The results follows since
$m=O(n)$ and $v(\cC_1)\le e(\cC_1)+1$.
\end{proof}

\section{Proof of \refT{T3} for \gndx}\label{SpfT3}

We assume in this section that the assumptions of \refT{T3} hold.
Note first that $\gb\=\E D(D-1)(D-2)\ge0$ with strict inequality
unless $\P(D\le2)=1$, but in the latter case $\ga=\E D(D-2)=-p_1<0$,
which is ruled out by the assumptions.

Define, in analogy with \eqref{key-funct-2}--\eqref{key-funct-1},
\begin{align*}
g_n(x)&\= \E x^{D_n}=\sumk \nkn x^k,
\\
  h_n(x)&\=
xg_n'(x)=
\sum_{k=1}^\infty k \nkn x^k,
\\
H_n(x)&\= \frac{2m(n)}{n}x^2 - h_n(x)
=
\sum_{k=1}^\infty  k \frac{n_k}{n}(x^2- x^k)
=
\E D_n(x^2-x^{D_n}).
\end{align*}

We begin with a general estimate for death processes and use it to
prove estimates improving Lemmas \refand{LL}{LtS}.

\begin{lemma}
\label{L2}
Let $\gamma>0$ and $d>0$ be fixed.
Let $N^{(x)}(t)$ be a Markov process such that $N^{(x)}(0) =x$ a.s. and
transitions are made according to the following rule: whenever
in state $y>0$, the process jumps to $y-d$ with intensity $\gamma y$;
in other words, the waiting time until the next event is $\Exp(1/\gamma y)$ and
each jump is of size $d$ downwards.
Then, for every $t_0\ge0$,
\begin{equation}\label{magnus}
\E\suptto \bigl| N^{(x)}(t)-e^{-\gamma  dt}x\bigr|^2
\le 8d\bigpar{\ee{\gam d t_0}-1}x+8d^2.
\end{equation}
If $x/d$ is an integer, we also have the better estimate
\begin{equation}\label{erika}
\E\suptto \bigl| N^{(x)}(t)-e^{-\gamma  dt}x\bigr|^2
\le 4d\bigpar{\ee{\gam d t_0}-1}x.
\end{equation}
\end{lemma}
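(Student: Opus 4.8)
The plan is to recognize that $e^{-\gamma dt}x$ is exactly the mean of $N^{(x)}(t)$ in the integer case and to build a martingale around it. First I would settle the case $x/d\in\{0,1,2,\dots\}$, where the process never goes negative, proving \eqref{erika}; then I would reduce the general case to this one by a monotone-type coupling.

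\emph{Step 1 (integer case, proof of \eqref{erika}).} Suppose $x=kd$ for an integer $k\ge0$, so $N^{(x)}$ takes values in $\{0,d,2d,\dots\}$ and stays non-negative. Computing the generator of the space--time process $(t,N^{(x)}(t))$ on $(t,y)\mapsto e^{\gamma dt}y$ shows it is identically $0$, so
\[
  \bar M(t)\=e^{\gamma dt}N^{(x)}(t)
\]
is a local martingale; it is bounded on $[0,t_0]$, hence a genuine martingale there, with $\bar M(0)=x$. Taking expectations gives $\E N^{(x)}(s)=xe^{-\gamma ds}$. The predictable quadratic variation of the pure-jump martingale $\bar M$ is $\langle\bar M\rangle_t=\gamma d^2\int_0^t e^{2\gamma ds}N^{(x)}(s)\dd s$ (jump rate $\gamma N^{(x)}(s)$ times squared jump size $(de^{\gamma ds})^2$), so
\[
  \E\langle\bar M\rangle_{t_0}=\gamma d^2\int_0^{t_0}e^{2\gamma ds}\,xe^{-\gamma ds}\dd s=dx\bigpar{e^{\gamma dt_0}-1}.
\]
Applying Doob's $L^2$ maximal inequality to the martingale $\bar M-x$ gives
\[
  \E\suptto\bigpar{\bar M(t)-x}^2\le 4\E\bigpar{\bar M(t_0)-x}^2=4\E\langle\bar M\rangle_{t_0}=4dx\bigpar{e^{\gamma dt_0}-1},
\]
and since $|N^{(x)}(t)-xe^{-\gamma dt}|=e^{-\gamma dt}|\bar M(t)-x|\le|\bar M(t)-x|$, this is \eqref{erika}.

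\emph{Step 2 (general case, proof of \eqref{magnus}).} For arbitrary $x>0$ put $\ell\=d\floor{x/d}$, a non-negative integer multiple of $d$ with $0\le x-\ell<d$. Couple $N^{(x)}$ and $N^{(\ell)}$ by matching their jumps at the common rate $\gamma\min\bigpar{N^{(x)}(s),N^{(\ell)}(s)}$ and assigning each extra jump, occurring at rate $\gamma\bigabs{N^{(x)}(s)-N^{(\ell)}(s)}$, to whichever of the two processes is currently larger. Each extra jump changes $N^{(x)}(t)-N^{(\ell)}(t)$ by exactly $\pm d$ and always pushes it back towards $0$; starting from $x-\ell\in[0,d)$, the difference therefore stays strictly between $-d$ and $d$ at all times (one checks this persists through the instants when one process is absorbed at a non-positive state while the other is still alive). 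Hence
\[
  \bigabs{N^{(x)}(t)-xe^{-\gamma dt}}\le\bigabs{N^{(\ell)}(t)-\ell e^{-\gamma dt}}+\bigabs{N^{(x)}(t)-N^{(\ell)}(t)}+(x-\ell)e^{-\gamma dt}<\bigabs{N^{(\ell)}(t)-\ell e^{-\gamma dt}}+2d,
\]
so that, using $(a+b)^2\le 2a^2+2b^2$ and Step 1 applied to $N^{(\ell)}$,
\[
  \E\suptto\bigabs{N^{(x)}(t)-xe^{-\gamma dt}}^2\le 2\,\E\suptto\bigabs{N^{(\ell)}(t)-\ell e^{-\gamma dt}}^2+8d^2\le 8d\bigpar{e^{\gamma dt_0}-1}\ell+8d^2,
\]
and \eqref{magnus} follows from $\ell\le x$.

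\emph{Main obstacle.} The one genuinely delicate point is the claim in Step 2 that the gap $\bigabs{N^{(x)}(t)-N^{(\ell)}(t)}$ never reaches $d$: this needs a careful case analysis of the boundary behaviour, namely when $N^{(x)}$ overshoots below $0$ by an amount $<d$ and when $N^{(\ell)}$ is absorbed at $0$ with $N^{(x)}$ still positive. An alternative, more computational route that avoids the coupling is to decompose $N^{(x)}$ directly via the martingale $e^{\gamma dt}N^{(x)}(t)+\gamma d\int_0^t e^{\gamma ds}\bigpar{N^{(x)}(s)}^-\dd s$; but then one must argue that the $e^{-\gamma dt}$ discounting keeps the quadratic-variation estimate from acquiring a spurious factor $e^{2\gamma dt_0}$, which is essentially the same subtlety in disguise. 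I would also re-verify the constant bookkeeping, since the numbers $8,8,4$ in the statement come precisely out of the two uses of $(a+b)^2\le 2a^2+2b^2$ together with Doob's factor $4$.
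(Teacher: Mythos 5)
Your proof is correct and takes essentially the same route as the paper: the martingale $e^{\gamma d t}N^{(x)}(t)$ with Doob's $L^2$ maximal inequality for the integer case, followed by a coupling that keeps the two copies within $d$ of each other to handle non-integer $x/d$. The only cosmetic differences are that the paper first normalizes to $d=1$ and reads off the variance from $N^{(x)}(t_0)\sim\Bi(x,e^{-\gamma t_0})$ rather than via the predictable quadratic variation, and that the boundary issue you flag does indeed resolve itself because under the coupling the gap can only take the two values $x-\ell$ and $x-\ell-d$.
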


\begin{proof}
  First assume that $d=1$ and that $x$ is an integer. In this case,
  the process is a standard pure death process taking the values
$x, x-1,\dots, 0$, describing the number of particles alive when the
  particles die independently with rate $\gam$.
As is well-known, and easily seen by regarding $\nx(t)$ as the sum of
  $x$ independent copies of the process $N^{(1)}(t)$, the process
$\ee{\gam t}\nx(t)$, $t\ge0$, is a martingale. Furthermore, for every
  $t\ge0$, $\nx(t)\sim\Bi(x,\eez{\gam t})$.
Hence,
by Doob's inequality,
\begin{equation}\label{e1}
  \begin{split}
\E \sup_{t\le t_0} \lrabs{\nx(t)-x\eez{\gam t}}^2
&\le \E \sup_{t\le t_0} \lrabs{\ee{\gam t}\nx(t)-x}^2
\le 4 \E \lrabs{e^{\gam t_0}\nx(t_0)-x}^2
\\
&=4\ee{2\gam t_0}\Var\nx(t_0)
=4(e^{\gam t_0}-1)x.
  \end{split}
\raisetag{12pt}
\end{equation}

Next, still assume $d=1$ but let $x\ge0$ be arbitrary.
We can couple the two processes $\nx(t)$ and $\nxf(t)$ with
different initial values such that whenever the smaller one jumps (by
$-1$), so does the other. This coupling keeps
$|\nx(t)-\nxf(t)|<1$ for all $t\ge0$, and thus,
\begin{equation*}
\sup_{t\le t_0} \lrabs{\nx(t)-x\eez{\gam t}}
\le
\sup_{t\le t_0} \lrabs{\nxf(t)-\floor{x}\eez{\gam t}}+2
\end{equation*}
and hence by \eqref{e1}
\begin{equation}\label{e2}
\E\sup_{t\le t_0} \lrabs{\nx(t)-x\eez{\gam t}}^2
\le
8\bigpar{\ee{\gam t_0}-1}x+8.
\end{equation}

Finally, for a general $d>0$ we observe that $\nx(t)/d$ is a process
of the same type with the parameters $(\gam,d,x)$ replaced by
$(\gam d,1,x/d)$, and the general result follows from \eqref{e2} and
\eqref{e1}.
\end{proof}

\begin{lemma}
  \label{LL+}
For every fixed $t_0>0$,
as \ntoo,
\begin{equation*}
\suptanto\bigabs{L(t)-2 m(n)\ee{-2 t}} =\Op\bigpar{n\qq\gan\qq+1}.
\end{equation*}
\end{lemma}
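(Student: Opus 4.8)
The plan is to realize $L(t)$ as a death process of the type in \refL{L2}, up to a small and controllable correction. Recall from \refS{Sfind} that $L(t)$ starts at $2m-1$ and decreases by $2$ each time a living half-edge dies spontaneously, except at the very last death. A cleaner object to analyze is the process $\hat L(t)$ that starts at $2m$ and decreases by $2$ at \emph{every} spontaneous death of a living half-edge; since the spontaneous deaths occur at rate equal to the current number of living half-edges, and the number of living half-edges is $L(t)$ (which differs from $\hat L(t)$ by at most a bounded amount at all times, in fact $0\le \hat L(t)-L(t)\le 2$ until the process terminates, after which both are $0$), we have that $\hat L(t)$ is — on the event that the process has not terminated — a Markov death process with $N^{(2m)}(0)=2m$, jump size $d=2$, and intensity $\gamma y$ with $\gamma=1$ when in state $y$. (The only subtlety is that the algorithmic process terminates when the last living half-edge dies; but before that time $\hat L$ genuinely has these dynamics, and after that time $L\equiv 0$ while $2m e^{-2t}$ is then negligible on any bounded interval once we know $L$ has hit $0$ — or, more simply, one couples with the genuine death process $N^{(2m)}$ run for all time and notes the two agree until absorption.)

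First I would set up this coupling precisely: let $N(t)=N^{(2m)}(t)$ be the death process of \refL{L2} with $\gamma=1$, $d=2$, $x=2m$, and check that $L(t)=N(t)$ for $t<T_{\mathrm{end}}$, where $T_{\mathrm{end}}$ is the termination time, and that $L(t)=N(t)=0$ for $t\ge T_{\mathrm{end}}$ would hold if not for the off-by-one at termination; in any case $\sup_t|L(t)-N(t)|\le 2$. Since $2m$ is even, $x/d=m$ is an integer, so the sharper bound \eqref{erika} applies with $\gamma d=2$:
\begin{equation*}
\E\suptanto\bigl|N(t)-e^{-2t}\,2m\bigr|^2\le 8\bigl(e^{2\gan t_0}-1\bigr)m.
\end{equation*}
Here I have used that the relevant time horizon is $\gan t_0$, not $t_0$ — this is the one genuinely new point compared with \refL{LL}. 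Applying \refL{L2} with $t_0$ replaced by $\gan t_0$ gives the bound $8(e^{2\gan t_0}-1)\cdot 2m$ from \eqref{erika} (with the factor $4d=8$), i.e. $O\bigl((e^{2\gan t_0}-1)m\bigr)$.

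Next I would combine this with $\sup_t|L(t)-N(t)|\le 2$ and with \eqref{mn} (so $m=O(n)$), and take square roots, to get
\begin{equation*}
\suptanto\bigl|L(t)-2m\,e^{-2t}\bigr|=\Op\Bigl(\bigl((e^{2\gan t_0}-1)\,n\bigr)^{1/2}+1\Bigr).
\end{equation*}
Finally, since $\gan\to 0$ under the hypotheses of \refT{T3} (see \refR{R3}), for all large $n$ we have $e^{2\gan t_0}-1\le C\gan$ for a constant $C=C(t_0)$, so $(e^{2\gan t_0}-1)n=O(n\gan)$ and the bound becomes $\Op\bigl((n\gan)^{1/2}+1\bigr)=\Op\bigl(n^{1/2}\gan^{1/2}+1\bigr)$, as claimed. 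The main obstacle is really just bookkeeping: getting the coupling between the algorithm's $L(t)$ and the idealized death process $N(t)$ exactly right near the termination time, and making sure the time rescaling $t_0\mapsto\gan t_0$ is fed correctly into \refL{L2} so that the exponent $e^{\gamma d t_0}$ becomes $e^{2\gan t_0}$ rather than $e^{2t_0}$; once that is in place, the estimate \eqref{erika} does all the work and the passage $e^{2\gan t_0}-1=O(\gan)$ is routine given $\gan\to0$.
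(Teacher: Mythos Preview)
Your overall strategy matches the paper's: recognize $L(t)$ as a death process of the type in \refL{L2}, apply that lemma with time horizon $\gan t_0$ rather than $t_0$, and use $e^{2\gan t_0}-1=O(\gan)$ (since $\gan\to0$) together with $m(n)=O(n)$ to obtain $\Op(n\qq\gan\qq+1)$. That part is fine.

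The detour through the auxiliary process $\hat L$ and the coupling with $N^{(2m)}$, however, is both unnecessary and not correct as stated. By your own definition $\hat L$ jumps exactly when $L$ does, so when $\hat L$ is in state $y$ its jump intensity equals the number of living half-edges, namely $L(t)=\hat L(t)-1=y-1$, not $y$; thus $\hat L$ is \emph{not} the process $N^{(2m)}$ of \refL{L2}, and the claim that the two ``agree until absorption'' is false (they do not even share the same initial value). One could repair this by invoking the coupling used inside the proof of \refL{L2} to compare $N^{(2m)}$ with $N^{(2m-1)}$, but there is no need.

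The paper simply observes that $L(t)$ \emph{itself} is the death process of \refL{L2} with $\gamma=1$, $d=2$, $x=L(0)=2m(n)-1$ (up to an $O(1)$ discrepancy at the very last jump). Since $x/d$ is not an integer one uses the general bound \eqref{magnus} rather than \eqref{erika}; its additive $8d^2$ term, together with the trivial $|2m\,e^{-2t}-(2m-1)e^{-2t}|\le1$, is exactly what produces the ``$+1$'' in the final estimate. Applying \eqref{magnus} directly to $L$ with $x=2m-1$ gives the result in one line, with no coupling bookkeeping required.
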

\begin{proof}
$L(t)$ is a death process as in \refL{L2}, with $\gam=1$, $d=2$ and
  $x=L(0)=2m(n)-1$.
Hence, by \refL{L2} applied to $\gan t_0$, observing that
$\gan t_0=O(\gan)=O(1)$ and $m(n)=O(n)$,
\begin{equation*}
\E\suptanto\bigabs{L(t)-2 m(n)\ee{-2t}}^2
=O\bigpar{(e^{2\gan t_0}-1)m(n)+1}
=O\bigpar{\gan n+1}.
\qedhere
\end{equation*}
\end{proof}

\begin{lemma}
  \label{LtS+}
For every fixed $t_0\ge0$
\begin{equation*}
\suptanto\bigabs{\tvk(t)-n_k\ee{-kt}}= \Op\bigpar{n\qq\gan\qq}
\end{equation*}
for every $k\ge0$ and
\begin{gather*}
\suptanto\bigabs{ \sumk \tvk(t)-ng_n(\ee{-t})}
= \Op\bigpar{n\qq\gan\qq+\ngaaa},
\\
\suptanto\bigabs{ \tS(t)-nh_n(\ee{-t})}
= \Op\bigpar{n\qq\gan\qq+\ngaaa}.
\end{gather*}
\end{lemma}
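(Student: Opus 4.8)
The plan is to reduce everything to \refL{L2} and Doob's $L^2$ maximal inequality, applied to martingales built from the \iid{} half-edge lifetimes; throughout write $T\=\gan t_0$ and recall that $\gan\to0$ by \refR{R3}.

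\emph{The individual processes $\tvk$.} For $k=0$ there is nothing to do, as $\tV_0(t)\equiv n_0$. For $k\ge1$, the process $\tvk(\cdot)$ \emph{is} a pure death process of the type in \refL{L2}: its $n_k$ particles are the degree-$k$ vertices, each removed at the instant the first of its $k$ lifetimes $\Exp(1)$ expires, hence at rate $k$, independently of the others. So \refL{L2} with $\gamma=k$, $d=1$, $x=n_k$ ($x/d$ an integer) over the horizon $T$ gives $\E\suptanto\bigabs{\tvk(t)-n_k\eez{kt}}^2\le4(\ee{kT}-1)n_k$; for fixed $k$ this is $O(\gan)\cdot n=O(n\gan)$, so $\suptanto\bigabs{\tvk(t)-n_k\eez{kt}}=\Op(n\qq\gan\qq)$.

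\emph{The sum $\tS$.} Summing the previous bound over $k$ is too crude (it costs a spurious factor $\gan\qqw$, since there are $\asymp\gan\qw$ contributing degrees), so I would first truncate the degree at $K\=\floor{1/\gan}$, so that $\ee{kT}\le\ee{KT}=O(1)$ for $k\le K$. The tail is disposed of trivially: $0\le\sum_{k>K}k\tvk(t)\le\sum_{k>K}kn_k\le K^{-(3+\eta)}\sumin d_i^{4+\eta}=O\bigpar{nK^{-(3+\eta)}}=O(n\gan^{3+\eta})$, and the same bounds $0\le\sum_{k>K}kn_k\eez{kt}\le\sum_{k>K}kn_k$ hold, so the two tails differ by $O(\ngaaa)$. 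For the truncated part write $k\bigpar{\tvk(t)-n_k\eez{kt}}=k\eez{kt}Z_k(t)$ with $Z_k(t)\=\ee{kt}\tvk(t)-n_k$; each $Z_k$ is a martingale (for the filtration generated by the observed half-edge deaths) with $Z_k(0)=0$, and the $Z_k$ are independent in $k$. Writing $\eez{kt}=1-(1-\eez{kt})$, the ``$1$''-part $\sum_{k\le K}kZ_k(t)$ is itself a martingale, with $\E\bigpar{\sum_{k\le K}kZ_k(T)}^2=\sum_{k\le K}k^2\Var\bigpar{\ee{kT}\tvk(T)}\le T\ee{KT}\sum_{k\le K}k^3n_k=O(n\gan)$ since $\sum_kk^3n_k=O(n)$ by \eqref{C2}; Doob then makes its supremum over $[0,T]$ equal to $\Op(n\qq\gan\qq)$. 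For the remaining part, bound $1-\eez{kt}\le kT$ and apply Doob to each $Z_k$: after a \CS{} step the expected supremum is $O\bigpar{T^{3/2}\bigpar{\sum_{k\le K}k^5n_k}\qq K\qq}$, and $\sum_{k\le K}k^5n_k\le K\sum_kk^4n_k=O(n/\gan)$ again by \eqref{C2}, so this is $O\bigpar{\gan^{3/2}(n/\gan)\qq\gan\qqw}=O(n\qq\gan\qq)$. Altogether $\suptanto\bigabs{\tS(t)-nh_n(\eet)}=\Op(n\qq\gan\qq+\ngaaa)$.

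\emph{The sum $\sumk\tvk$.} I would deduce this from the bound for $\tS$ rather than repeat the argument. The process $n-\sumk\tvk(t)$ counts the vertices having lost a half-edge by time $t$, a counting process with intensity $\tS(s)$ at time $s$; hence $\mathcal M(t)\=n-\sumk\tvk(t)-\int_0^t\tS(s)\dd s$ is a martingale with $\E\mathcal M(T)^2=\E\int_0^T\tS(s)\dd s\le2mT=O(n\gan)$, so $\suptanto\bigabs{\mathcal M(t)}=\Op(n\qq\gan\qq)$. Since $ng_n(\eet)=n-\int_0^tnh_n(\eez s)\dd s$ as well, subtracting gives $\sumk\tvk(t)-ng_n(\eet)=-\int_0^t\bigpar{\tS(s)-nh_n(\eez s)}\dd s-\mathcal M(t)$, hence $\suptanto\bigabs{\sumk\tvk(t)-ng_n(\eet)}\le T\suptanto\bigabs{\tS(t)-nh_n(\eet)}+\suptanto\bigabs{\mathcal M(t)}=\Op(n\qq\gan\qq+\ngaaa)$.

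\emph{Main obstacle.} The delicate point is the uniform control of the two sums: the per-$k$ estimate from \refL{L2}, summed naively, overshoots by $\gan\qqw$. Beating this needs \emph{both} the truncation at $K\asymp\gan\qw$ — so the factors $\ee{kT}$ stay bounded, the discarded tail being exactly what produces the $\ngaaa$ term through the $(4+\eta)$-th moment bound \eqref{C2} — \emph{and} exploiting the independence of the $\tvk$ in $k$ to form a single martingale before invoking Doob, rather than adding up per-$k$ maximal bounds. Getting the error term to come out as precisely $\ngaaa$ is what pins down the choice $K\asymp\gan\qw$ and calls for the moment bookkeeping of \refR{R3}.
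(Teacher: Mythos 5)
Your argument is correct, and in several places it takes a genuinely different route from the paper, so let me compare.

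For $\tS$, the paper also truncates at $K\asymp\gan\qw$, but it then simply sums the per-$k$ $L^1$ bounds $\E\suptanto|\tvk(t)-n_k\eez{kt}|\le C(k\gan n_k)\qq$ obtained from \refL{L2}, using the \CS{} split $\sum_{k\le K}k(k\gan n_k)\qq=\gan\qq\sum_{k\le K}(k^{4+\eta}n_k)\qq k^{-(1+\eta)/2}\le\gan\qq\bigpar{\sum_k k^{4+\eta}n_k}\qq\bigpar{\sum_k k^{-1-\eta}}\qq=O(n\qq\gan\qq)$, with the trivial bound $n_k$ on the tail $k>K$. So your closing remark that ``adding up per-$k$ maximal bounds'' necessarily overshoots by $\gan\qqw$ is not quite right: the naive \CS{} (pulling out a factor $K\qq$) does overshoot, but the paper's convergence-factor split $k^{-(1+\eta)/2}$ avoids the loss without ever invoking independence across $k$. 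Your approach — decomposing $\eez{kt}=1-(1-\eez{kt})$ and using independence of the $\tvk$ to treat $\sum_{k\le K}kZ_k$ as a single martingale, then controlling the time-dependent remainder by a per-$k$ Doob bound plus \CS{} — is a legitimate alternative that replaces the clever \CS{} exponent-splitting by a structural observation; it yields the same rate, and your tail estimate $O(n\gan^{3+\eta})$ is in fact slightly sharper than the $O(\ngaaa)$ the paper records.

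For $\sumk\tvk$, the paper just says the argument is ``proved the same way'' (weight $1$ instead of $k$), which is indeed easier than the $\tS$ case and gives the stated rate directly. Your derivation from the $\tS$ bound via the compensator identity $n-\sumk\tvk(t)-\int_0^t\tS(s)\dd s=\mathcal M(t)$ and $ng_n(\eet)=n-\int_0^t nh_n(\eez s)\dd s$ is clean and correct (jumps of $n-\sumk\tvk$ are a.s.\ of size $1$, and $\E\mathcal M(T)^2=\E\int_0^T\tS\le 2mT=O(n\gan)$), but note it actually produces a better bound — $\Op(T\sup|\tS-nh_n|)+\Op(n\qq\gan\qq)=\Op(n\qq\gan^{3/2}+n\gan^4+n\qq\gan\qq)$ — than you need; a one-line redo of the $\tS$ argument with weight $1$ is shorter. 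Both are fine.

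\end{document}
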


\begin{proof}
  $\tvk(t)$ is a death process as in \refL{L2} with $\gam=k$, $d=1$
  and $x=n_k$. Consequently, by \eqref{erika}
(for $k\ge1$; the case $k=0$ is trivial),
\begin{equation}\label{e5}
\E\suptanto\bigabs{\tvk(t)-n_k\ee{-kt}}^2
\le 4\bigpar{\ee{k\gan t_0}-1}n_k
\le 4k\gan t_0\ee{k\gan t_0}n_k.
\end{equation}
The estimate for fixed $k$ follows immediately.

To treat $\tS(t)=\sumki k\tvk(t)$, we use \eqref{e5} for $k\le\gan\qw$
and obtain 
\begin{equation*}
\E\suptanto\bigabs{\tvk(t)-n_k\ee{-kt}}
\le \bigpar{4 t_0\ee{t_0}k\gan n_k}\qq,
\qquad
k\le\gan\qw.
\end{equation*}
For $k>\gan\qw$ we use the trivial estimate
$\sup_t|\tvk(t)-n_k\eez{kt}|\le n_k$. Summing over $k$ and using
the \CS{} inequality and \eqref{C2} we find, for some $C$ depending on $t_0$,
\begin{multline*}
\E\suptanto\lrabs{\tS(t)-\sumki kn_k\eez{kt}}
\le
\E\sumki k\suptanto\lrabs{\tvk(t)-n_k\eez{kt}}
\\[-6pt]
\begin{aligned}
\\&
\le
C\sum_{k\le\gan\qw}k(k\gan n_k)\qq +\sum_{k>\gan\qw}k n_k
\\&
\le
C\gan\qq\Bigpar{\sumki k^{4+\eta} n_k}\qq \Bigpar{\sumki k^{-1-\eta}}\qq
+\gan^3\sum_{k>\gan\qw}k^4 n_k
\\&
=O\bigpar{n\qq\gan\qq+\ngaaa}.
 \end{aligned}
\end{multline*}
The estimate for
$\sumk \tvk(t)$ is proved the same way.
\end{proof}

Lemmas \ref{LL+} and \ref{LtS+} imply, \cf{} \eqref{ta} and
\eqref{tah}, for every $t_0>0$,
\begin{equation}\label{e7}
  \begin{split}
\gan\qww\sup_{t\le t_0}\bigabs{n\qw\tA(\gan &t)-H_n(\eez{\gan t})}
\\&
=\gan\qww n\qw\sup_{t\le \gan t_0}\bigabs{L(t)-\tS(t)-nH_n(\eez{t})}
\\&
=\Op\bigpar{n\qqw\gan\qqcw+n\qw\gan\qww+\gan}
=\op(1),
  \end{split}
\end{equation}
recalling $\gan\to0$ by \refR{R3} and $n\gan^3\to\infty$.

Let
\begin{equation*}
\hhn(t)\= H_n(\eet)=\E D_n\bigpar{\eez{2t}-\eez{t D_n}}.
\end{equation*}
Then $\hhn(0)=0$, $\hhn'(0)=\E D_n(D_n-2)=\gan$,
$\hhn''(0)=\E D_n(4-D_n^2)=-\E D_n(D_n+2)(D_n-2)$, and for all $t\ge0$,
\begin{equation*}
  \lrabs{\hhn'''(t)}
=\lrabs{\E\bigpar{D_n\bigpar{8\eez{2t}-D_n^3\eez{tD_n}}}}
\le
\E\bigpar{8D_n+D_n^4}
=O(1).
\end{equation*}
Moreover, using \refR{R3},
\begin{equation*}
  \begin{split}
  \hhn''(0)
&=\,-\E D_n(D_n+2)(D_n-2)
\\&
\to-\E D(D+2)(D-2)
\\&
=\,-\E D(D-1)(D-2)-3\E D(D-2)
=-\gb.
  \end{split}
\end{equation*}
Hence a Taylor expansion yields, for $t\ge0$,
\begin{equation*}
  \begin{split}
\hhn(\gan t)
=\gan^2t+\thalf\hhn''(0)(\gan t)^2+O\bigpar{(\gan t)^3}
=\gan^2\lrpar{t-\thalf\gb t^2+o(t^2+ t^3)}.
  \end{split}
\end{equation*}
Consequently, \eqref{e7} yields, for every fixed $t_0>0$,
\begin{equation}
  \label{e8}
\suptto\lrabs{\gan\qww n\qw \tA(\gan t)-(t-\thalf\gb t^2)}
=\op(1).
\end{equation}

We now proceed as in the proof of \refT{T1},
using $\tH(t)\=t-\half\gb t^2$ instead of $H(\eet)$.
We note that $\tH(t)>0$ for $0<t<2/\gb$ and
$\tH(t)<0$ for $t>2/\gb$; thus we now define $\tau=2/\gb$.
We obtain from \eqref{e8}, for any random $T\pto\tau$,
\begin{equation*}
  \gan\qww n\qw \inf_{t\le T}\tA(\gan t)\pto0
\end{equation*}
and, using \eqref{tssa}, since by \eqref{C2}
$\dmax=o(n^{1/3})=o(n\gan^2)$,
\begin{equation}\label{e11}
 \gan\qww n\qw \sup_{t\le T}|A(\gan t)-\tA(\gan t)|
=
 \gan\qww n\qw \sup_{t\le T}|\tS(\gan t)-S(\gan t)|\pto0
\end{equation}
and thus, by \eqref{e8} again,
\begin{equation*}
 \sup_{t\le T}| \gan\qww n\qw A(\gan t)-\tH(t)|\pto0.
\end{equation*}

Taking $T=\tau$,
it follows as in \refS{SpfT1} that \whp{} there is a component $\pC$
explored between two
random times $T_1$ and $T_2$ with $T_1/\gan\pto0$ and
$T_2/\gan\pto\tau=2/\gb$.
We have the following analogue of \refL{LC}.

\begin{lemma}
  \label{LC+}
Let\/ $T_1$ and $T_2$ be two (random) times when \ref{Ci} are performed,
with $T_1\le T_2$, and assume that $T_1/\gan\pto t_1$ and
$T_2/\gan\pto t_2$ where
$0\le t_1\le t_2\le \tau=2/\gb$.
If\/ $\tC$ is the union of all components explored between $T_1$ and
$T_2$, then
\begin{align*}
  v_k(\tC)
&=n\gan k p_k(t_2-t_1)+\op(n\gan),
\qquad k\ge0,
\\
  v(\tC)
&=n\gan\gl(t_2-t_1)+\op(n\gan),
\\
  e(\tC)
&=n\gan\gl(t_2-t_1)+\op(n\gan).
\end{align*}
In particular, if $t_1=t_2$, then $v(\tC)=\op(n\gan)$ and $e(\tC)=\op(n\gan)$.
\end{lemma}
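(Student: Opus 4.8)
The plan is to mimic the proof of \refL{LC} line by line, replacing the quantities $H(\eet)$, $\tau=-\ln\xi$ and the scaling $n\qw$ by their critical-regime counterparts $\tH(t)=t-\half\gb t^2$, $\tau=2/\gb$ and the scaling $\gan\qww n\qw$ together with the time change $t\mapsto\gan t$. First I would record that $\tC$ contains exactly the vertices awakened in the interval $[T_1,T_2)$, so that, as in \eqref{td},
\begin{equation*}
v_k(\tC)=V_k(T_1-)-V_k(T_2-),\qquad k\ge1,
\end{equation*}
and $2e(\tC)=\sum_k k\bigpar{V_k(T_1-)-V_k(T_2-)}$, while $v(\tC)=\sum_k\bigpar{V_k(T_1-)-V_k(T_2-)}$. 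The whole task is therefore to control $V_k(T_j-)$ for $j=1,2$.

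Next I would pass from $V_k$ to $\tvk$. Since $T_2/\gan\pto t_2\le\tau=2/\gb$ and $\tH$ is continuous with $\inf_{t\le t_2}\tH(t)=0$ (as $\tH>0$ on $(0,\tau)$ and $\tH(0)=0$), estimate \eqref{e8} gives $\gan\qww n\qw\inf_{t\le T_2}\tA(\gan t)\pto0$, and then \eqref{e11} (valid for the random time $T=T_2$) gives
\begin{equation*}
\sup_{t\le T_2}\gan\qww n\qw\bigabs{\tS(\gan t)-S(\gan t)}\pto0,
\end{equation*}
i.e. $\sup_{t\le T_2}\bigabs{\tS(\gan t)-S(\gan t)}=\op(n\gan^2)=\op(n\gan)$. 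Exactly as in \eqref{td1}, $0\le\tvk(t)-V_k(t)\le k\qw\bigpar{\tS(t)-S(t)}$, so $\sup_{t\le T_2}\bigabs{\tvk(\gan t)-V_k(\gan t)}=\op(n\gan)$ for each fixed $k\ge1$ (and trivially for $k=0$). Hence $V_k(T_j-)=\tvk(T_j-)+\op(n\gan)$ for $j=1,2$.

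It remains to evaluate $\tvk(T_j-)$. By \refL{LtS+}, $\tvk(\gan t)=n_k\eez{k\gan t}+\Op\bigpar{n\qq\gan\qq}$ uniformly for $t\le t_0$; since $n\qq\gan\qq=o(n\gan)$ precisely because $n\gan\to\infty$ (equivalently $n\gan^3\to\infty$ forces $n\gan\to\infty$), and since $n_k=np_k+o(n)$ and $\eez{k\gan t}=1-k\gan t+O((\gan t)^2)$ with $\gan\to0$, we get, using $T_j/\gan\pto t_j$ and the continuity of $t\mapsto 1-k\gan t$ in the relevant range,
\begin{equation*}
\tvk(T_j-)=np_k\eez{kT_j}+\op(n\gan)=np_k\bigpar{1-k\gan t_j}+\op(n\gan)=np_k-n\gan k p_k t_j+\op(n\gan).
\end{equation*}
Subtracting the $j=1$ and $j=2$ versions, the constant term $np_k$ cancels and $v_k(\tC)=n\gan kp_k(t_2-t_1)+\op(n\gan)$, which is the first claim. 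For $v(\tC)$ and $e(\tC)$ I would sum over $k$: using $\sum_k\bigpar{\tvk(t)-V_k(t)}\le\tS(t)-S(t)$ as in the proof of \refL{LC}, together with $\sum_k k p_k=\gl$ and the uniform-integrability control of the tails $\sum_{k>K}kn_k/n<\eps$ from \refR{RC1} (to justify interchanging the sum with the limit), one obtains $v(\tC)=n\sum_k p_k\bigpar{\eez{kT_1}-\eez{kT_2}}+\op(n\gan)=n\gan\bigpar{\sum_k kp_k}(t_2-t_1)+\op(n\gan)=n\gan\gl(t_2-t_1)+\op(n\gan)$, and similarly $2e(\tC)=n\sum_k kp_k\bigpar{\eez{kT_1}-\eez{kT_2}}+\op(n\gan)$; here I would expand $\eez{kT_1}-\eez{kT_2}=k\gan(t_2-t_1)+o(\gan)$ and use $\sum_k k^2 p_k=2\gl$ (which holds since $\E D(D-2)=0$), giving $e(\tC)=n\gan\gl(t_2-t_1)+\op(n\gan)$. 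The case $t_1=t_2$ is then immediate.

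The main obstacle is the uniform-in-$k$ bookkeeping needed when summing the per-$k$ estimates: one must ensure that the $\op(n\gan)$ errors, after multiplication by $k$ and summation, still add up to $\op(n\gan)$, and that the Taylor remainders $O((k\gan t)^2)$ summed against $k p_k$ remain $o(\gan)$ — this is exactly where the fourth-moment hypothesis \eqref{C2} and the tail bounds of \refL{LtS+} (rather than \refL{LtS}) are doing the real work, paralleling the tail truncation already carried out in the proof of \refL{LtS+}.
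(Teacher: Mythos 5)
Your overall strategy is the same as the paper's — pass from $V_k$ to $\tvk$ via \eqref{e11}, then apply \refL{LtS+} and Taylor-expand near $x=1$ — but two of your intermediate steps are wrong as written, and the second one is where the real work of this lemma lives.

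First, the displayed chain $\tvk(T_j-)=np_k\eez{kT_j}+\op(n\gan)=np_k(1-k\gan t_j)+\op(n\gan)=\dots$ is false. Replacing $n_k$ by $np_k$ inside $n_k\eez{kT_j}$ costs $|n_k-np_k|\,\eez{kT_j}=o(n)$, and $o(n)$ is in general far larger than $n\gan$ since $\gan\to0$; so $\tvk(T_j-)$ is \emph{not} $np_k\eez{kT_j}+\op(n\gan)$. The paper keeps $n_k$ until after the subtraction: $\tvk(T_1-)-\tvk(T_2-)=n_k\bigpar{\eez{kT_1}-\eez{kT_2}}+\op(n\gan^2)$, and only because the factor $\eez{kT_1}-\eez{kT_2}=k\gan(t_2-t_1)+\op(\gan)$ is itself of order $\gan$ does the final replacement $n_k=np_k+o(n)$ cost only $o(n)\cdot O(\gan)=o(n\gan)$. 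Your parenthetical remark that ``the constant term $np_k$ cancels'' gestures at the right phenomenon, but the $o(n)$ discrepancy sits in each term, not in the cancelled constant, and the algebra must be arranged so that this discrepancy is multiplied by the $O(\gan)$ factor before you invoke cancellation — otherwise the written equalities simply do not hold.

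Second, for $v(\tC)$ and $e(\tC)$ you sum against $p_k$ and appeal to the uniform-integrability tail bound of \refR{RC1}. That gives $\sum_{k>K}kn_k/n<\eps$, i.e.\ errors of order $\eps n$, which is useless at the scale $n\gan$. To pass from $n_k/n$ to $p_k$ at scale $o(\gan)$ you would need something like the Scheff\'e statements $\sum_k k\,|n_k/n-p_k|\to0$ and $\sum_k k^2|n_k/n-p_k|\to0$, and on top of that uniform-in-$k$ control of the Taylor remainders — precisely the ``main obstacle'' you name at the end without resolving. The paper avoids all of this by never leaving the finite-$n$ quantities: it applies \refL{LtS+} directly to get $v(\tC)=n g_n(\eez{T_1})-ng_n(\eez{T_2})+\op(n\gan^2)$ and $2e(\tC)=nh_n(\eez{T_1})-nh_n(\eez{T_2})+\op(n\gan^2)$, and then Taylor-expands $g_n$ and $h_n$ at $x=1$ using only $g_n'(1)=\E D_n\to\gl$, $h_n'(1)=\E D_n^2\to2\gl$ and the uniform bounds on $g_n''$, $h_n''$ coming from \refCC{C1d2}. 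This closes the gap cleanly; you should adopt it rather than the tail-truncation route you sketch.
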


\begin{proof}
$\tC$ consists of the vertices awakened in the interval $[T_1,T_2)$,
    and thus,
using \eqref{e11} and \refL{LtS+},
\begin{align*}
v_k(\tC)
&=V_k(T_1-)-V_k(T_2-)
=\tvk(T_1-)-\tvk(T_2-)+\op(n\gan^2)
\\&
=n_k\bigpar{\eez{kT_1}-\eez{kT_2}}+\op(n\gan^2)
\\&
=n_k\bigpar{kT_2-kT_1+\Op(\gan^2)}+\op(n\gan^2)
\\&
=n_k k\gan(t_2-t_1)+\op(n\gan)
\\&
= kp_k n\gan(t_2-t_1)+\op(n\gan).
\end{align*}
Further, since
$g_n'(1)=\E D_n\to\E D=\gl$ and
$g_n''(x)=O(1)$ for $0<x<1$,
a Taylor expansion yields, for $j=1,2$,
\begin{equation*}
g_n\bigpar{\eez{T_j}}
=g_n(1)-g_n'(1) T_j +O(T_j^2)
=1-\gl \gan t_j + \op(\gan).
\end{equation*}
Similarly, since
$h_n'(1)=\E D_n^2=\gan+2\E D_n\to2\gl$,
and $h_n''(x)=O(1)$ for $0<x<1$,
\begin{equation*}
h_n\bigpar{\eez{T_j}}
=h_n(1)-h_n'(1) T_j +O(T_j^2)
=\E D_n-2\gl \gan t_j + \op(\gan).
\end{equation*}
It now follows from \eqref{e11} and \refL{LtS+} that
\begin{align*}
v(\tC)&=ng_n\xpar{\eez{T_1}}-ng_n\xpar{\eez{T_2}}+\op(n\gan^2)
=n\gan\gl(t_2-t_1)+\op(n\gan),
\\
2e(\tC)&=nh_n\xpar{\eez{T_1}}-nh_n\xpar{\eez{T_2}}+\op(n\gan^2)
=2n\gan\gl(t_2-t_1)+\op(n\gan).
\end{align*}
\end{proof}

In particular, for the component $\pC$ found above, with $t_1=0$ and
$t_2=\tau$,
\begin{align}
v_k(\pC)&= kp_k\tau n\gan+\op(n\gan),
\\
v(\pC)&=\gl\tau n\gan+\op(n\gan), 
\\
e(\pC)&=\gl\tau n\gan+\op(n\gan). \label{ecq}
\end{align}

Since $\tau=2/\gb$, these are the estimates we claim for $\cC_1$, and
it remains only to show that \whp{} all other components are much
smaller than $\pC$.

Fix $\eps>0$ with $\eps<\tau$, and say that a component of \gndx{}
is \emph{large} if it has at least $\eps m\gan$ edges ($2\eps m\gan$
half-edges). Since, by \eqref{ecq} and \eqref{mn}, $e(\pC)/(m\gan)\pto2\tau$,
\whp{} $\pC$ is large, and further
$(2\tau-\eps)m\gan<e(\pC)<(2\tau+\eps)m\gan$.
Let $\eee$ be the event that
$e(\pC)<(2\tau+\eps)m\gan$ and that the total number of edges in large
components is at least
$(2\tau+2\eps)m\gan$.

It follows by \refL{LC+} applied to $T_0=0$ and $T_1$ that the total
number of vertices or edges in components found before $\pC$ is
$\op(n\gan)$.
Thus there exists a sequence $\gan'$ of constants such that
$\gan'=o(\gan)$ and \whp{} at most $n\gan'$ vertices are found before
$T_1$, when the first large component is found.

Let us now condition on the final
graph obtained through our component-finding algorithm. Given \gndx,
the components appear in our process in the size-biased
order (with respect to the number of edges) obtained by
picking half-edges uniformly at random (with replacement, for
simplicity) and taking the corresponding components, ignoring every
component that already has been taken. We have seen that \whp{} this
finds component containing at most $n\gan'$ vertices before a
half-edge in a large component is picked. Therefore, starting again
at $T_2$, \whp{} we find at most $n\gan'$ vertices in new components
before a half-edge is chosen in some large component; this half-edge
may belong to $\pC$, but if $\eee$ holds, then with probability at
least $\eps_1\=1-(2\tau+\eps)/(2\tau+2\eps)$ it does not, and
therefore it belongs to a new large component. Consequently, with
probability at least $\eps_1\P(\eee)+o(1)$, the algorithm in
\refS{Sfind} finds a second large component at a time $T_3$, and
less than $n\gan'$ vertices between $T_2$ and $T_3$. In this case,
let $T_4$ be the time this second large component is completed. (If
no such second large component is found, let for definiteness
$T_3=T_4=T_2$.)

Note that
$0\le \tV_1(t)-V_1(t)\le \tS(t)-S(t)$ for all $t$.
Hence, using
\refL{LtS+} and \eqref{e11} with $T=T_2/\gan$,
the number of vertices of degree 1 found between $T_2$ and $T_3$ is
\begin{equation*}
  \begin{split}
V_1(T_2-)-V_1(T_3-)
&\ge
\tV_1(T_2-)-(\tS(T_2-)-S(T_2-))-\tV_1(T_3-)
\\&
=n_1\eez{T_2}-n_1\eez{T_3}+\op(n\gan^2).
  \end{split}
\end{equation*}
Since this is at most $n\gan'=o(n\gan)$, and $n_1/n\to p_1>0$, it
follows that
$\eez{T_2}-\eez{T_3}=\op(\gan)$,
and thus
$T_3=T_2+\op(\gan)=\tau\gan+\op(\gan)$.
Hence, \eqref{e11} applies to $T=T_3/\gan$, and since no \ref{Ci} is
performed between $T_3$ and $T_4$,
\begin{equation}
  \label{f1}
\sup_{t\le T_4}\bigabs{\tS(t)-S(t)}
\le
\sup_{t\le T_3}\bigabs{\tS(t)-S(t)}+\dmax
=\op(n\gan^2).
\end{equation}

Let $t_0>\tau$; thus $\tH(t_0)=t_0-\half\gb t_0^2<0$ and \eqref{e8}
yields, with $\gd=|\tH(t_0)|/2>0$,
\whp{} $\tA(\gan t_0) \le -n\gan^2\gd$ and thus
\begin{equation*}
  \tS(\gan t_0)-S(\gan t_0)
=A(\gan t_0)-\tA(\gan t_0) \ge n\gan^2\gd.
\end{equation*}
Hence \eqref{f1} shows that \whp{} $T_4<\gan t_0$. Since $t_0>\tau$ is
arbitrary, and further $T_2\le T_3\le T_4$ and $T_2/\gan\pto\tau$, it
follows that $T_3/\gan\pto\tau$ and $T_4/\gan\pto\tau$.

Finally, by \refL{LC+} again, this time applied to $T_3$ and $T_4$, the
number of edges found between $T_3$ and $T_4$ is
$\op(n\gan)=\op(m\gan)$. Hence,
\whp{} there is no large component found there, although the
construction gave a large component with probability at least
$\eps_1\P(\eee)+o(1)$.
Consequently,
$\eps_1\P(\eee)=o(1)$ and
$\P(\eee)=o(1)$.

Recalling the definition of $\eee$, we see that \whp{} the total
number of edges in large components is at most $(2\tau+2\eps)m\gan$;
since \whp{} at least $(2\tau-\eps)m\gan$ of these belong to $\pC$,
there are at most $3\eps m\gan$ edges, and therefore
at most $3\eps m\gan+1$ vertices, in any other component.

Choosing $\eps$ small enough, this shows that \whp{} $\cC_1=\pC$, and
further
$v(\cC_2)\le e(\cC_2)+1\le 3\eps m\gan+1<3\gl\eps n\gan$.
\qed

\section{Conceivable extensions}

It seems to be possible to
obtain quantitative versions of our results, such as
a central limit theorem for the size of the giant component,
as we did for the $k$-core in \cite{SJ196}.
(See \citet{Pittel} and \citet{BBV} for \gnp{} and \gnm, and
\cite{LL06} for the random cluster model.)
Similarly, it should be possible to obtain large deviation estimates.

Further, in the transition window, where $\gan=O(n\qqq)$,
an appropriate scaling seems to lead to convergence to Gaussian processes
resembling the one studied by \citet{Aldous}, and it seems likely that
similar results on the distribution of the sizes of the largest
components could be obtained.

We have not attempted above to give more precise bounds on the size
of the second component $\cC_2$, and we leave it as an open problem
to see whether our methods can lead to new insights for this
problem. It appears that direct analysis of the Markov process
$(A(t),V_0(t),V_1(t), \ldots)$ can show that the largest component
has size $O(\log n)$ in the subcritical phase, and that so does the
second largest component in the supercritical case, but we have not
pursued this.

Finally, it seems possible to adapt the methods of this paper to
random hypergraphs and obtain results similar to those
in~\citet{BCK07}, but we leave this to the reader.

\begin{acks}
This research was partly done during a visit by SJ to the University of
Cambridge, partly funded by Trinity College. MJL was partly supported
by the Nuffield Foundation and by a Research Fellowship from the
Humboldt Foundation.
\end{acks}

\newcommand\AAP{\emph{Adv. Appl. Probab.} }
\newcommand\JAP{\emph{J. Appl. Probab.} }
\newcommand\JAMS{\emph{J. \AMS} }
\newcommand\MAMS{\emph{Memoirs \AMS} }
\newcommand\PAMS{\emph{Proc. \AMS} }
\newcommand\TAMS{\emph{Trans. \AMS} }
\newcommand\AnnMS{\emph{Ann. Math. Statist.} }
\newcommand\AnnPr{\emph{Ann. Probab.} }
\newcommand\CPC{\emph{Combin. Probab. Comput.} }
\newcommand\JMAA{\emph{J. Math. Anal. Appl.} }
\newcommand\RSA{\emph{Random Struct. Alg.} }
\newcommand\SPA{\emph{Stoch. Proc. Appl.} }
\newcommand\ZW{\emph{Z. Wahrsch. Verw. Gebiete} }
\newcommand\DMTCS{\jour{Discr. Math. Theor. Comput. Sci.} }

\newcommand\AMS{Amer. Math. Soc.}
\newcommand\Springer{Springer}
\newcommand\Wiley{Wiley}

\newcommand\vol{\textbf}
\newcommand\jour{\emph}
\newcommand\book{\emph}
\newcommand\inbook{\emph}
\def\no#1#2,{\unskip#2, no. #1,} 

\newcommand\webcite[1]{\hfil\penalty0\texttt{\def~{\~{}}#1}\hfill\hfill}
\newcommand\webcitesvante{\webcite{http://www.math.uu.se/\~{}svante/papers/}}
\newcommand\arxiv[2][\relax]{\webcite{arXiv:#2 \testtom{#1}}}
\newcommand\testtom[1]{%
\def\xxa{#1}\def\xxb{\relax}\ifx\xxa\xxb \else [#1]\fi}

\def\nobibitem#1\par{}

\end{document}